\def\1{^{-1}}
\def\id{{\sf id}}
\newtheorem{De}{Definition}
\newtheorem{Pro}[De]{Proposition}
\newtheorem{Le}[De]{Lemma}
\newtheorem{Co}[De]{Corollary}
\newtheorem*{Ex*}{Examples}
\newtheorem*{Example*}{Example}
\def\xto#1{\xrightarrow[]{#1}}
\def\d{\partial}
\def\1{^{-1}}
\def\im{\sf Im}
\def\ker{\sf ker}
\def\al{{\alpha}}
\def\bu{\bullet}
\def\bb{{\beta}}
\def\dd{{\delta}}
\begin{document}
\title{On the nonabelian cohomology with coefficients in a crossed module}
\author{MAriam Pirashvili}

\maketitle

\begin{abstract}
This paper is concerned with the nonabelian cohomology of groups with coefficients in crossed modules. These objects were introduced by Dedecker and studied by Breen, Borovoi, Noohi and many others. In this paper we study several important objects that are classified by these cohomologies and for them we develop a theory similar to the Schreier obstruction theory of group extensions.
\end{abstract}

\section{Introduction}

This paper is concerned with a variant of nonabelian cohomology which generalises and extends classical nonabelian cohomology of groups \cite{serre}. The main object of study is the nonabelian cohomology of groups with coefficients in crossed modules. More specifically, given a nonabelian group $G$ which acts in a particular sense on an crossed module $M\to L$, a nonabelian cohomology $H^i(G, M\to L)$ can be defined for $i=0,1,2$. These objects were introduced by Dedecker in the 1960's \cite{dedecker} and have since been studied by Borovoi \cite{borovoi}, Noohi \cite{noohi} and many others. These objects can be obtained as a special case of the more general theory of nonabelian cohomotopy of cosimplicial crossed modules. In Section \ref{Cohomotopy} all these definitions are recalled.

Section $3$ is a generalisation of the Schreier theory of group extensions, which is based on Dedecker's work \cite{dedecker}. In that paper, Dedecker introduced the notion of an $(M\to L)$ extension of a group $G$ by $M$. He also introduced two versions of the second cohomology of $G$ with coefficients in $(M\to L)$, denoted by $H^2(G, M\to L)$ and $\widehat{H}^2(G,M\to L)$. Two equivalence relations on the set of extensions are introduced in this paper. This seems to be missing in Dedecker's paper. The resulting congruence classes are shown to be isomorphic to the cohomologies $H^2(G, M\to L)$ and $\widehat{H}^2(G,M\to L)$. In the case when the crossed module is $M\to Aut(M)$, the case of $\widehat{H}^2(G,M\to Aut(M))$ is equivalent to the Schreier theory,  while the case $H^2(G,M\to Aut(M))$ seems to be new.

Section $4$ is a special case of the general theory of non-abelian cohomology of toposes by Grothendieck and Giraud \cite{giraud}, when the topos is the category of $G$-sets for a fixed group $G$. Instead of considering gerbes, we use Duskin's approach of considering the related objects called bouquets \cite{duskin}. In the case of the topos being $G$-sets, the bouquets are referred to as $G$-equivariant bouquets and are $G$-equivariant groupoids $\Gamma$, which are non-empty and connected.

For a crossed module $M\to L$, we generalise this notion and introduce $G$-equivariant bouquets defined over $M\to L$. The case of standard $G$-bouquets is obtained when we use the standard crossed module $M\to Aut(M)$, where $M$ is the group of automorphisms of a chosen object of $\Gamma$. The main result of this section is showing that such bouquets over $M\to L$ are classified by the second cohomology of $G$ with coefficients in $M\to L$. Our proof differs from Duskin's approach. It is direct and establishes a connection between bouquets and  the theory of   $(M\to L)$-extensions developed in  Section $4$.

The aim of Section $5$ is to describe $H^1(G,M\to L)$ in terms of torsor-like structures. The motivation for our definition of bitorsors over $M\to L$ comes from an old work by Breen \cite{breen}, where he defined bitorsors, which in our setting become bitorsors over $M\to Aut(M)$.

Our first result of this section is proving a bijection between bitorsors $M\to L$ and $H^1(G,M\to L)$. Breen's result can be obtained as a special case of this when $L = Aut(M)$. The second topic of this section is related to obstruction theory. In more detail, given a crossed extension
$$0\to A\to M\xto{\d} L\xto{\pi}\, Q\to 1,$$
for any $(M\xto{\d} L)$-bitorsor $P$, we construct an invariant $\pi_*(P)\in H^0(G,Q)$. We answer the following question: For a given element $a\in H^0(G,Q)$, does there exist an $(M\xto{\d} L)$-bitorsor $P$ such that $\pi_*(P)=a$? To answer the question, we construct an element $o(a)\in H^2(G,A)$ such that $o(a)=0$ iff $\pi_*(P)=a$. The construction of $o(a)$ uses the results of the previous section about equivariant bouquets. This an analogue of the classical Schreier-Eilenberg-Maclane obstruction theory for group extensions, but taking place one dimension lower. In the last part of this section, we study a weaker notion of bitorsors and we show that under some assumptions on $L$ the two notions are equivalent. This weaker version is intuitively more closely related to Breen's definition of bitorsors \cite{breen}.

\section{Cohomotopy of cosimplicial crossed modules}\label{Cohomotopy}

\subsection{Crossed modules}  A \emph{crossed module} is a group homomorphism $\d:M\to L$ (shortly $\d$), together with an action of the group $L$ on the group $M$, such that
$$\d(^xm)=x\d(m)x\1$$
$$^{\d(n)}m=nmn\1$$
Here $x\in L$, $m,n\in M$. The canonical homomorphism $L\to Aut(M)$, induced from the action of $L$ on $M$ is denoted by $\rho$. Thus, $\rho(x)(m)=\,^xm.$

Sometimes we use the additive notation. Instead of $^xm$, we write $xm$. Then the above identities have following form
$$\d(xm)=x+\d(m)-x$$
$$\d(n)m=n+m-n.$$

Let us denote $\im(\d)$ by $N$. Then $N$ is a normal subgroup of $L$. We set $A=\ker(\d)$ and $Q=L/N$. Thus one has an exact sequence
$$1\to A\to M\xto{\d} L\to Q\to 1.$$
It follows from the definition that $A$ is a central subgroup of $M$ and the action of $L$ on $M$ yields an action of $Q$ on $A$.

\begin{Ex*}
\begin{enumerate}
\item Let $A$ be a $Q$-module and $\d:A\to Q$ be the trivial homomorphism. Then $\d$ is a crossed module. 

\item For any abelian group $A$ one has a crossed module $A\to 0$.

\item Let $N$ be a normal subgroup of a group $L$. Then the inclusion $i:N\to L$ is a crossed module, where the action of $L$ on $N$ is given by conjugation. 

\item Let $M$ be a group. Denote by ${\bf A}(M)$ the crossed module $\iota:M\to Aut(M)$, where $\iota(m)$ is the inner automorphism $x\mapsto mxm\1$. The action of $Aut(M)$ on $M$ is the obvious one.

\end{enumerate}
\end{Ex*}

If $\d:M\to L$ and $\d':M'\to L'$ are crossed modules, a \emph{morphism of crossed modules} is a pair $(\al, \bb)$, where $\al:M\to M'$ and $\bb:L\to L'$ are group homomorphisms such that the diagram 
$$\xymatrix{M\ar[r]^\d\ar[d]_\al & L\ar[d]^\bb\\ M'\ar[r]_{\d'} &L'}$$
commutes and 
$$\al(^x m)=^{\bb (x)}\al(m).
$$
Here $m\in M$, $x\in L$.
Crossed modules and their morphisms form a category ${\bf Xmod}$. Observe that for any crossed module $\d:M\to L$ there is a morphism of crossed modules
$$\xymatrix{M\ar[r]^\d \ar[d]_{id} & L\ar[d]^\rho\\ M\ar[r]^{\iota} &Aut(M)
}$$

\subsection{Definition of cohomotopy}

Let $(M^\bu\xto{\d^\bu}L^\bu)$ be a cosimplicial crossed module, i.e. a cosimplicial object in the category ${\bf Xmod}$. We use additive notations for groups. We set
$$\pi^0(M^\bu\xto{\d^\bu}L^\bu)=\{m_0\in M^0| \d^0(m_0)=0, \dd^0(m_0)=\dd^1(m_0)\}$$
Here as usual $\dd^i$ denote the coface maps in a cosimplicial object. 

A \emph{$1$-cocycle} of $(M^\bu\xto{\d^\bu}L^\bu)$ is a pair $(m_1,x_0)$ such that $m_1\in M^1, x_0\in L^0$ and 
$$\dd^2m_1+\dd^0m_1=\dd^1m_1$$
$$\dd^0x_0=\d^1m_1+\dd^1x_0.$$
We let ${\sf Z^1} (M^\bu\xto{\d^\bu}L^\bu)$ be the set of all $1$-cocycles of $(M^\bu\xto{\d^\bu}L^\bu)$. If $(m_1,x_0)$ and $(n_1,y_0)$ are $1$-cocycles, then they are \emph{cohomotopic} provided there exists an element $m_0\in M^0$ such that
$$n_1=-\dd^1m_0+m_1+\dd^0m_0,$$
and
$$y_0=-\d^0 m_0+x_0.$$
One easily sees that cohomotopy is an equivalence relation and the set of equivalence classes is denoted by $\pi^1 (M^\bu\xto{\d^\bu}L^\bu)$.

A \emph{$2$-cocycle} of $(M^\bu\xto{\d^\bu}L^\bu)$ is a pair $(m_2,x_1)$ such that $m_2\in M^2, x_1\in L^1$ and
$$\dd^1 m_2+\dd^3m_2=\dd^2 m_2+^{\dd^2\dd^1x_1}\dd^0m_2$$
and
$$\dd^1x_1=\d^2 m_2+\dd^2x_1+\dd^0x_1.$$
We let ${\sf Z^2} (M^\bu\xto{\d^\bu}L^\bu)$ be the set of all $2$-cocycles of $(M^\bu\xto{\d^\bu}L^\bu)$. 

Let $(m_2,x_1)$ and $(n_2,y_1)$ be two $2$-cocycles. We write $(m_2,x_1) \sim (n_2,y_1)$ provided there exists $h^1\in M^1$ such that
$$m^2=-\dd^1h^1+n^2+\dd^2h^1+^{\dd^1x^1}\dd^0h^1$$
and
$$y^1=\d(h^1)+x^1.$$
The set ${\sf Z^2} (M^\bu\xto{\d^\bu}L^\bu)/\sim$ is denoted by ${\widehat{\pi}}^2(M^\bu\xto{\d^\bu}L^\bu)$ and is called \emph{thick $2$-cohomotopy}, compare with \cite[p.36]{dedecker}.

Let $(m_2,x_1)$ and $(n_2,y_1)$ be two $2$-cocycles. We call them \emph{cohomotopic} and write $(m_2,x_1) \simeq (n_2,y_1)$ provided there exist $h^1\in M^1$ and $z\in L^0$ such that
$$m^2=-\dd^1h^1+^{\d^1z}n^2+\dd^2h^1+^{\dd^1x^1}\dd^0h^1$$
and
$$y^1=-\dd^1z+\d(h^1)+x^1+\dd^0z.$$
One sets $$\pi^2(M^\bu\xto{\d^\bu}L^\bu):={\sf Z}^2(M^\bu\xto{\d^\bu}L^\bu)/\simeq.$$
It is clear that if $(m_2,x_1) \sim (n_2,y_1)$, then $ (m_2,x_1) \simeq (n_2,y_1)$. Hence one has a canonical surjection:
$${\widehat{\pi}}^2(M^\bu\xto{\d^\bu}L^\bu)\twoheadrightarrow \pi^2(M^\bu\xto{\d^\bu}L^\bu).$$

More on the topic of the cohomotopy of cosimplicial crossed modules will be explored in a follow-up paper \cite{mariam_preparation}.

\subsection{Cohomology with coefficients in crossed modules} \subsection{Definition}\label{eqcross} Let $G$ be a group and $\d:M\to L$ be a crossed module. We will say that $G$ \emph{acts} on $M\xto{\d} L$, or $M\xto{\d} L$ is a \emph{$G$-equivariant crossed module} provided $G$ acts on the groups $M$ and $L$ in such a way that the following identities hold:
$$\d(^x m)=\,^x\d(m),$$
$$^x(^\tau m) = ^{{^x}\tau}(^xm).$$
Here $m\in M$, $\tau\in L$ and $x\in G$.

\begin{Example*} Let $M$ be a $G$-group. We claim that ${\bf A}(M)=(M\xto{\iota} Aut(M))$ is a $G$-equivariant crossed module, where the action of $G$ on $Aut(A)$ is given by $$(^x\alpha)(m)=\,^x(\alpha(^{x^{-1}}m)).$$ 
To check this fact we first compute
$$(^x\iota(m))(n)= \, ^x(\iota(m)(^{x^{-1}}n))=\,^x(m\,(^{x^{-1}}n)m^{-1})=\, ^xm n\, ^x(m^{-1})=
\iota(^x m)(n)
$$
Secondly, we also have 
$$^{^x\alpha}(^xm)=\,^x(\alpha (^{x^{-1}}(^xm))= ^x(\alpha(m))=\,^x(^\alpha m)$$
So, both identities of Definition \ref{eqcross} hold.
\end{Example*}
 
Let $\d$ be a crossed $G$-module. Then one obtains a cosimplicial crossed module $C^*(G, M\xto{\d} L)$, which in dimension $n$ is given by $$Maps(G^n, M)\to Maps(G^n, L).$$ The coface and codegenerecy maps are given by the same maps as in group cohomology. Then one puts 
$$H^i(G, M\xto{\d} L)=\pi^i(C^*(G, M\xto{\d} L)), \ \  i=0,1,2.$$

In this way one recovers the groups defined in \cite{borovoi}. We also set

$${\widehat H}^2(G, M\xto{\d} L)={\widehat \pi}^2(C^*(G, M\xto{\d} L).$$

\section{Applications to the Schreier theory of group extensions} 

In this section we recall the main results of the Schreier theory of extensions \cite{sch} and relate the results to the cohomotopy of cosimplicial crossed modules. Actually, we will present the result in a slightly more general form, where crossed modules are involved. The corresponding notion of an extension with respect to a given crossed module is due to Dedecker in the 60's.

\subsection{ ${\bf Ext}_{non-ab}(\Pi,M)$ and ${\bf wExt}_{non-ab}(\Pi,M)$}
Let $M$ and $\Pi$ be groups. An \emph{extension} of $\Pi$ by $M$ is a short exact sequence of groups:
\begin{equation}\label{B} 1\to M\to B\xto{\sigma} \Pi\to 1.\end{equation}
Following \cite[Section IV.8]{homology} we use additive notation for $M$ and $B$ and multiplicative notation for $\Pi$.
Two such extensions (\ref{B}) and
\begin{equation}\label{B'} 1\to M\to B'\xto{\sigma} \Pi\to 1\end{equation}
are \emph{congruent} provided they fit in a commutative diagram of group homomorphisms
 $$\xymatrix{1\ar[r] & M\ar[r]\ar[d]_{id} & B\ar[r]^{\sigma}\ar[d]^{\alpha} & \Pi\ar[r]\ar[d]^{id} &1\\
1\ar[r]&  M\ar[r] & B'\ar[r]^{\sigma'} &\Pi\ar[r] & 1}.$$
The set of congruency classes of extensions is denoted by ${\bf Ext}_{non-ab}(\Pi,M)$. Moreover, two extensions (\ref{B}) and (\ref{B'}) are called \emph{weakly congruent} if there exists an isomorphism $\alpha:B\to B'$ which fits in the commutative diagram
$$\xymatrix{1\ar[r] & M\ar[r]\ar[d]_{\tau} & B\ar[r]^{\sigma}\ar[d]^{\alpha} & \Pi\ar[r]\ar[d]^{id} &1\\
1\ar[r]&  M\ar[r] & B'\ar[r]^{\sigma'} &\Pi\ar[r] & 1}.$$
Denote by ${\bf wExt}_{non-ab}(\Pi,M)$ the weakly congruent classes of extensions. By definition, we have a surjective map
$${\bf Ext}_{non-ab}(\Pi,M) \to {\bf wExt}_{non-ab}(\Pi,M).$$

The main result of Schreier theory describes the set ${\bf Ext}_{non-ab}(\Pi,M)$ via the cohomotopy of cosimplicial crossed modules. 

We will obtain a more general result, which will describe the equivalence classes of $(M\xto{\d} L)$-extensions for a crossed module  $\d:M\to L$. To explain this translation we state the following basic fact about extensions.
\begin{Le} \label{i-iiiext} \begin{itemize}

\item [i)] For any extension (\ref{B}) the map $\zeta:B\to Aut(M)$ given by $\zeta(b)(m)=b+m-b$ is a group homomorphism and it fits in the following commutative diagram
$$\xymatrix{1\ar[r] & M\ar[r]\ar[d]_{\iota} & B\ar[r]^{\sigma}\ar[dl]^{\zeta} & \Pi\ar[r]&1\\
&  Aut(M)& && }$$
where as usual $\iota(m)$ is the inner  automorphism of $M$ defined by $m$.
\item [ii)] For any commutative diagram of group extensions 
$$\xymatrix{1\ar[r] & M\ar[r]\ar[d]_{id} & B\ar[r]^{\sigma}\ar[d]^{\alpha} & \Pi\ar[r]\ar[d]^{id} &1\\
1\ar[r]&  M\ar[r] & B'\ar[r]^{\sigma'} &\Pi\ar[r] & 1}$$
the following diagarm
$$\xymatrix{B\ar[dr]_{\zeta}\ar[rr]^\alpha & & B'\ar[dl]^{\zeta'}\\&Aut(M)&}
$$also commutes.

\item[iii)] For any commutative diagram of group extensions 
$$\xymatrix{1\ar[r] & M\ar[r]\ar[d]_{\tau} & B\ar[r]^{\sigma}\ar[d]^{\alpha} & \Pi\ar[r]\ar[d]^{id} &1\\
1\ar[r]&  M\ar[r] & B'\ar[r]^{\sigma'} &\Pi\ar[r] & 1}$$
and for any $b\in B$, $m\in M$ one has
$$\tau\1\zeta'(\alpha(b))\tau(m)=\zeta(b)(m).$$

\end{itemize}
\end{Le}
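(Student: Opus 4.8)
The plan is to verify all three statements by direct computation with the defining formula $\zeta(b)(m)=b+m-b$, using only that $M$ is normal in $B$ and that the vertical maps in the diagrams are group homomorphisms compatible with the inclusions of $M$. No appeal to section maps or to the quotient $\Pi$ is needed; $\Pi$ merely records where $\sigma$ lands.

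For part i) I would first check that $\zeta$ is well defined with values in $Aut(M)$: since $M=\ker\sigma$ is normal in $B$, the element $b+m-b$ lies in $M$ for all $b\in B$, $m\in M$; the assignment $m\mapsto b+m-b$ is an endomorphism of $M$ because addition distributes over conjugation, and it is bijective with two-sided inverse $n\mapsto -b+n+b$, i.e. $\zeta(b)\1=\zeta(-b)$. The homomorphism property $\zeta(b+b')=\zeta(b)\circ\zeta(b')$ is the identity $(b+b')+m-(b+b')=b+(b'+m-b')-b$, valid because $-(b+b')=-b'-b$ in the (possibly non-abelian, additively written) group $B$. Finally, restricting $\zeta$ along the inclusion $M\hookrightarrow B$ and recalling that in additive notation the inner automorphism is $\iota(m)(n)=m+n-m$ gives $\zeta|_M=\iota$, which is exactly the asserted commutativity of the triangle.

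For part iii), which contains ii) as the special case $\tau=\id$, the key input is that the left-hand square forces $\alpha(m)=\tau(m)$ for every $m\in M$. Hence for $b\in B$ and $m\in M$,
$$\zeta'(\alpha(b))(\tau(m))=\alpha(b)+\tau(m)-\alpha(b)=\alpha(b)+\alpha(m)-\alpha(b)=\alpha(b+m-b)=\alpha(\zeta(b)(m)).$$
Since $\zeta(b)(m)\in M$ and $\alpha$ agrees with $\tau$ on $M$, the right-hand side equals $\tau(\zeta(b)(m))$, which rearranges to $\tau\1\zeta'(\alpha(b))\tau(m)=\zeta(b)(m)$, proving iii). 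Specialising $\tau=\id$ yields $\zeta'\circ\alpha=\zeta$, which is the commutativity of the triangle in ii).

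I do not expect any genuine obstacle here: the statement is essentially a bookkeeping exercise. The only points that require a moment's care are the well-definedness in i) (namely that $\zeta$ really lands in $Aut(M)$ and not merely in the monoid of endomorphisms of $M$), and consistently tracking the additive-versus-multiplicative conventions — in particular that $\iota(m)$ is the map $n\mapsto m+n-m$ and that $-(b+b')=-b'-b$, so that the signs in the homomorphism identity come out correctly.
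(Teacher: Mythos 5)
Your proposal is correct and follows essentially the same route as the paper: part iii) is established by the identity $\zeta'(\alpha(b))(\tau(m))=\alpha(b)+\tau(m)-\alpha(b)=\alpha(b+m-b)=\tau(\zeta(b)(m))$ using that $\alpha$ and $\tau$ agree on $M$, and ii) is the case $\tau=\id$. The only difference is that you spell out the verification of i), which the paper dismisses as obvious.
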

 
\begin{proof} i) is obvious and ii) is a particular case of iii) when $\tau=id$. For iii) observe that for any element $a\in M$ we have $\tau(m)=\alpha(m)$, hence we have
$\zeta'(\alpha(b))(\tau(m))=\alpha(b)+\tau(m)-\alpha(b)=\alpha(b+m-b)=\alpha(\zeta(b)(m))=\tau(\zeta(b)(m))$ and the result follows.

\end{proof}
\subsection{$(M\xto{\d} L)$-\emph{extensions} and the sets ${\bf Ext}_{\d}(\Pi,M)$ and ${\bf wExt}_{\d}(\Pi,M)$} 

Lemma \ref{i-iiiext} suggests that we consider a more general situation. Let $\d:M\to L$ be a crossed module. Since $L$ acts on $M$, we have a canonical homomorphism $$\rho:L\to Aut(M)$$ given by $\rho(x)(m)=xm$.

\begin{De}\label{crext} An $(M\xto{\d} L)$-\emph{extension} of $\Pi$ by $M$ is a short exact sequence of groups (\ref{B}) together with a homomorphism  $\varrho:B\to L$ which fits in the commutative diagram
$$\xymatrix{1\ar[r] & M\ar[r]\ar[d]_{\d} & B\ar[d]_\zeta \ar[r]^{\sigma}\ar[dl]^{\varrho} & \Pi\ar[r]&1\\
&  L\ar[r]_\rho& Aut(M)&& }.$$
\end{De}
It follows that $b+m-b=\varrho(b)m$ for all $b\in B$ and $m\in M$. This definition goes back to Dedecker.

Two $(M\xto{\d} L)$-extensions (\ref{B}) and (\ref{B'})
are \emph{congruent} if there exists a homomorphism $\alpha:B\to B'$ (necessarily an isomorphism) such that the diagram
$$\xymatrix{ B \ar[dr]_\varrho \ar[rr]^\alpha& & B'\ar[dl]^{\varrho '}\\ &L}$$
commutes and $\alpha$ fits in the following commutative diagram of groups
$$\xymatrix{1\ar[r] & M\ar[r]\ar[d]_{id} & B\ar[r]^{\sigma}\ar[d]^{\alpha} & \Pi\ar[r]\ar[d]^{id} &1\\
1\ar[r]&  M\ar[r] & B'\ar[r]^{\sigma'} &\Pi\ar[r] & 1}.$$

The set of congruency classes of $(M\xto{\d} L)$-extensions is denoted by ${\bf Ext}_{\d}(\Pi,M)$. 

Observe  that for any group $M$ and the crossed module ${\bf A} (M)=(M\xto{\iota} Aut(M))$ one has

\begin{equation}\label{nonab=A}
{\bf Ext}_{non-ab}(\Pi,M)={\bf Ext}_{{\bf A}(M)}(\Pi,M).
\end{equation}

This follows from Lemma \ref{i-iiiext} and the fact that in this case $\rho$ is the identity map and hence $\varrho=\zeta$ is uniquely determined.

\begin{De}\label{wextcr}
Two $(M\xto{\d} L)$-extensions (\ref{B}) and (\ref{B'}) are called \emph{weakly congruent} if there exists an isomorphism $\alpha:B\to B'$ and an element $\tau\in L$ such that the diagram  
$$\xymatrix{1\ar[r] & M\ar[r]\ar[d]_{\rho(\tau)} & B\ar[r]^{\sigma}\ar[d]^{\alpha} & \Pi\ar[r]\ar[d]^{id} &1\\
1\ar[r]&  M\ar[r] & B'\ar[r]^{\sigma'} &\Pi\ar[r] & 1}$$
commutes, and if for any $b\in B$ one has
\begin{equation}\label{tauverrho}
\tau^{-1} \varrho'( \alpha(b)) \tau=\varrho(b).
\end{equation}
\end{De}

Denote by ${\bf wExt}_{\d}(\Pi,M)$ the weak congruence classes of $(M\xto{\d} L)$-extensions. By definition, we have a surjective map
$${\bf Ext}_{\d}(\Pi,M) \to {\bf wExt}_{\d}(\Pi,M).$$

\begin{Le} For any group $M$ one has
$${\bf wExt}_{non-ab}(\Pi,M)={\bf wExt}_{{\bf A}(M)}(\Pi,M),$$
where ${\bf A}(M)=(M\xto{\iota} Aut(M))$ is the canonical crossed module associated to $M$. 
\end{Le}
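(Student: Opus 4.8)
The plan is to compare the two relevant equivalence relations on the same underlying set of extensions. When $L = Aut(M)$ and $\d = \iota$, we already know from \eqref{nonab=A} that the underlying objects coincide: a non-abelian extension $1\to M\to B\xto{\sigma}\Pi\to 1$ and an $(M\xto{\iota}Aut(M))$-extension are the same thing, since by Lemma \ref{i-iiiext}(i) the homomorphism $\varrho$ is forced to equal $\zeta$, where $\zeta(b)(m)=b+m-b$. So the only thing left to prove is that the relation ``weakly congruent as non-abelian extensions'' agrees with the relation ``weakly congruent as $(M\xto{\iota}Aut(M))$-extensions'' on this common set of objects.

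First I would check the easy inclusion. Suppose two extensions are weakly congruent as $(M\xto{\iota}Aut(M))$-extensions, witnessed by an isomorphism $\alpha:B\to B'$ and an element $\tau\in Aut(M)$ satisfying the square of Definition \ref{wextcr} with the left vertical map $\rho(\tau)=\tau$, together with $\tau\1\varrho'(\alpha(b))\tau=\varrho(b)$. Then in particular the square with vertical maps $\tau,\alpha,id$ commutes, which is exactly the defining diagram for weak congruence of non-abelian extensions; so they are weakly congruent as non-abelian extensions. Hence there is a surjection ${\bf wExt}_{{\bf A}(M)}(\Pi,M)\twoheadrightarrow {\bf wExt}_{non-ab}(\Pi,M)$, and it only remains to see it is injective, i.e. the reverse implication.

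For the reverse implication, suppose $\alpha:B\to B'$ and $\tau:M\to M$ (an automorphism) give a weak congruence of non-abelian extensions: the square with vertical maps $\tau,\alpha,id$ commutes. I must produce from this data a weak congruence of $(M\xto{\iota}Aut(M))$-extensions. The candidate element of $L=Aut(M)$ is $\tau$ itself. The square condition is already satisfied, so the only remaining thing to verify is the compatibility \eqref{tauverrho}, namely $\tau\1\varrho'(\alpha(b))\tau=\varrho(b)$ in $Aut(M)$ for all $b\in B$. But here $\varrho=\zeta$ and $\varrho'=\zeta'$, and this is precisely the content of Lemma \ref{i-iiiext}(iii), which states $\tau\1\zeta'(\alpha(b))\tau(m)=\zeta(b)(m)$ for all $m$. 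Thus the hypotheses of Definition \ref{wextcr} hold, and the two extensions are weakly congruent as $(M\xto{\iota}Aut(M))$-extensions.

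The one point that requires a moment's care — and the only possible obstacle — is that in the non-abelian setting the map $\tau$ in the weak congruence diagram is allowed to be an arbitrary automorphism of $M$, and one should confirm this is exactly the freedom encoded by ``$\tau\in L=Aut(M)$'' with the left vertical map being $\rho(\tau)=\tau$; since $\rho$ is the identity on $Aut(M)$ there is no loss, so the matching is clean. Putting the two implications together gives the claimed equality ${\bf wExt}_{non-ab}(\Pi,M)={\bf wExt}_{{\bf A}(M)}(\Pi,M)$.
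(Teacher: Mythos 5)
Your proposal is correct and follows essentially the same route as the paper: identify the two classes of objects via \eqref{nonab=A} (where $\varrho$ is forced to be $\zeta$ because $\rho$ is the identity on $Aut(M)$), and then observe that the extra condition \eqref{tauverrho} in Definition \ref{wextcr} is automatically satisfied by Lemma \ref{i-iiiext}(iii), so the two weak congruence relations coincide. You merely spell out the two inclusions that the paper's one-line proof leaves implicit.
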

\begin{proof} We already mentioned that the notion of an $(M\xto{\iota} Aut(M))$-extension is equivalent to the notion of an extension, see (\ref{nonab=A}). Hence we have to show that weak congruences in both frameworks are the same, but this is a consequence of part iii) of Lemma \ref{i-iiiext}.
\end{proof}

\subsection{Cocycle description of ${\bf Ext}_{\d}(\Pi,M)$ and ${\bf wExt}_{\d}(\Pi,M)$}\label{23.16.08.20}
The aim is to describe the  sets ${\bf Ext}_{\d}(\Pi,M)$ and ${\bf wExt}_{\d}(\Pi,M)$ via cocycles. 

For a given $(M\xto{\d} L)$-extension (\ref{B}) we choose a set theoretical section
$u:\Pi\to B$. We will always assume that $u(1)=0$. We set $\phi=\varrho\circ u:\Pi\to L$. 
\begin{Le} For any $x\in \Pi$ and $m\in M$ one has
\begin{equation}\label{h-is-gan} u(x)+m=\phi(x)m+u(x).
\end{equation}
\end{Le}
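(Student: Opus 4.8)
The plan is to unwind the definitions. Recall that for an $(M\xto{\d}L)$-extension we have the homomorphism $\varrho\colon B\to L$ and, by the commutativity of the diagram in Definition \ref{crext}, the conjugation action $\zeta$ of $B$ on $M$ factors as $\zeta=\rho\circ\varrho$. Spelled out, this means precisely that for every $b\in B$ and $m\in M$ one has $b+m-b=\,^{\varrho(b)}m$, a fact already noted right after Definition \ref{crext}.

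From here the proof is a one-line substitution. Given $x\in\Pi$ and $m\in M$, I would apply the displayed identity $b+m-b=\,{}^{\varrho(b)}m$ with $b=u(x)$, obtaining $u(x)+m-u(x)=\,^{\varrho(u(x))}m$. Since $\phi=\varrho\circ u$ by definition, the right-hand side is $^{\phi(x)}m$, which I would also write as $\phi(x)m$ in the additive-for-$M$, multiplicative-for-$L$ convention. Adding $u(x)$ on the right of both sides of $u(x)+m-u(x)=\phi(x)m$ then yields exactly $u(x)+m=\phi(x)m+u(x)$, which is \eqref{h-is-gan}.

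There is essentially no obstacle here; the only thing to be careful about is notational bookkeeping — keeping straight that $M$ is written additively while $L$ and $\Pi$ are multiplicative, and that $^{\phi(x)}m$ and $\phi(x)m$ denote the same element under this paper's conventions (as set up in the crossed-module subsection). I would present it as follows.

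\begin{proof}
By the commutativity of the diagram in Definition \ref{crext} we have $\zeta=\rho\circ\varrho$, so that $b+m-b=\zeta(b)(m)=\,^{\varrho(b)}m$ for all $b\in B$ and $m\in M$, as already observed. Taking $b=u(x)$ and using $\phi=\varrho\circ u$ gives $u(x)+m-u(x)=\,^{\phi(x)}m=\phi(x)m$. Adding $u(x)$ on the right yields $u(x)+m=\phi(x)m+u(x)$.
\end{proof}
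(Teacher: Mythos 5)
Your proof is correct and follows essentially the same route as the paper's: both identify $u(x)+m-u(x)$ with $\zeta(u(x))(m)=\rho(\varrho(u(x)))(m)=\rho(\phi(x))(m)$ via the commutative diagram in Definition \ref{crext} and the definition $\phi=\varrho\circ u$. No issues.
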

\begin{proof} In fact, we have
$ u(x)+m-u(x)=(\zeta u(x))(m)=\rho\varrho(u(x))(m)=\rho(\phi(x))m$ and the result follows.
\end{proof}
Define the map $f:\Pi\times \Pi\to M$ by
$$u(xy)=f(xy)+u(x)+u(y).$$

\begin{Le}\label{schreier1} The maps $\phi:\Pi\to L$ and  $f:\Pi\times \Pi\to M$ satisfy the following identities
\begin{equation}\label{phi-id}\phi(xy)=\d (f(x,y)) \phi(x)\phi(y),\end{equation}
\begin{equation}\label{f-2-cyc} f(xy,z)+f(x,y)=f(x,yz)+\phi(x)f(y,z)\end{equation}
\end{Le}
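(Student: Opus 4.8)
The plan is to derive both identities directly from the defining equations of the section data, namely $u(x)+m=\phi(x)m+u(x)$ (equation~\eqref{h-is-gan}) and $u(xy)=f(x,y)+u(x)+u(y)$, together with the crossed module axiom $\d({}^\tau m)=\tau\d(m)\tau\1$ and the fact that $\varrho$ is a group homomorphism.

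First I would prove~\eqref{phi-id}. Applying $\varrho$ to the equation $u(xy)=f(x,y)+u(x)+u(y)$ and using that $\varrho$ is a homomorphism gives $\phi(xy)=\varrho(f(x,y))\,\phi(x)\phi(y)$. Now by Definition~\ref{crext} the diagram forces $\varrho|_M=\d$ (more precisely $\varrho$ restricted to $M$ equals $\d$, since the left triangle commutes), so $\varrho(f(x,y))=\d(f(x,y))$, which is exactly~\eqref{phi-id}.

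Next I would prove the $2$-cocycle-type identity~\eqref{f-2-cyc} by computing $u(xyz)$ in two ways using associativity in $B$. On one hand, $u((xy)z)=f(xy,z)+u(xy)+u(z)=f(xy,z)+f(x,y)+u(x)+u(y)+u(z)$. On the other hand, $u(x(yz))=f(x,yz)+u(x)+u(yz)=f(x,yz)+u(x)+f(y,z)+u(y)+u(z)$. To compare these I need to move $f(y,z)\in M$ past $u(x)$ from the right to the left, and equation~\eqref{h-is-gan} gives exactly $u(x)+f(y,z)={}^{\phi(x)}f(y,z)+u(x)$ (in additive notation $\phi(x)f(y,z)$). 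Substituting and cancelling $u(x)+u(y)+u(z)$ on the right yields $f(xy,z)+f(x,y)=f(x,yz)+{}^{\phi(x)}f(y,z)$, which is~\eqref{f-2-cyc}.

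Neither step presents a genuine obstacle; the only point requiring care is bookkeeping with the non-abelian additive notation — in particular remembering that $M$ and $B$ are written additively but need not be commutative, so the order of terms matters and the only legal move is the commutation rule~\eqref{h-is-gan}. The reader should also note that the normalization $u(1)=0$ guarantees $f(1,x)=f(x,1)=0$, which is consistent with but not needed for the two displayed identities.
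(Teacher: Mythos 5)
Your proposal is correct and follows essentially the same route as the paper: identity \eqref{phi-id} is obtained by applying the homomorphism $\varrho$ to $u(xy)=f(x,y)+u(x)+u(y)$ and using $\varrho\circ\kappa=\d$, and identity \eqref{f-2-cyc} by the standard two-fold computation of $u(xyz)$ combined with the commutation rule \eqref{h-is-gan}. Nothing is missing.
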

\begin{proof} We have
$$\phi(xy)=\varrho(u(xy))=\varrho(f(xy)+u(x)+u(y))=\varrho(f(x,y))\phi(x)\phi(y)$$
Since $\varrho\circ \kappa=\d$, we can write $\varrho(f(x,y))=\d(f(x,y))$ and (\ref{phi-id}) follows.

For (\ref{f-2-cyc}) we compute $u(xyz)$ in two different ways:
$$u(x(yz))=f(x,yz)+u(x)+u(yz)=f(x,yz)+u(x)+f(y,z)+u(y)+u(z)=$$
$$=f(x,yz)+\phi(x)(f(y,z))+u(x)+u(y)+u(z)$$
and
$$u((xy)z)=f(xy,z)+u(xy)+u(z)=f(xy,z)+f(x,y)+u(x)+u(y)+u(z).$$
Comparing these computations, we obtain the equality (\ref{f-2-cyc}).
\end{proof}

\begin{Le}  Let $\phi:\Pi\to L$ and  $f:\Pi\times \Pi\to M$ be maps satisfying the conditions (\ref{phi-id}) and (\ref{f-2-cyc}). Then we have an $(M\xto{\d} L)$-extension (\ref{B}), where $B$ as a set is the cartesian product $B=M\times \Pi$, while the group  operation is defined by
$$(a, x)+(b,y)=(a+\phi(x)b-f(x,y),xy), \quad x,y\in \Pi, a, b\in M$$
and the structural maps are given by
 $$\kappa(a)=(a,1), \sigma(a,x)=x, \varrho(a,x)=\d(a) \phi(x).$$
\end{Le}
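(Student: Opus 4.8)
The plan is to verify directly that the quadruple $(B, \kappa, \sigma, \varrho)$ defined in the statement satisfies all the requirements of Definition \ref{crext}. The work splits into three independent tasks: (1) checking that the given binary operation on the set $M \times \Pi$ is a group law; (2) checking that $\kappa$, $\sigma$, $\varrho$ are group homomorphisms making the short sequence $1 \to M \to B \to \Pi \to 1$ exact; and (3) checking that $\varrho$ fits into the commutative triangle of Definition \ref{crext}, i.e.\ that $\zeta = \rho \circ \varrho$ where $\zeta(b)(m) = b + m - b$, and that $\varrho \kappa = \d$.

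First I would establish associativity of the operation $(a,x)+(b,y) = (a + {}^{\phi(x)}b - f(x,y), xy)$. Expanding $((a,x)+(b,y))+(c,z)$ and $(a,x)+((b,y)+(c,z))$, the $\Pi$-coordinates agree trivially, and the $M$-coordinates agree precisely when one uses the identity ${}^{\phi(x)}({}^{\phi(y)}c) = {}^{\phi(xy)\d(f(x,y))^{-1}}c$ (which comes from (\ref{phi-id}) together with the crossed-module identity ${}^{\d(n)}m = nmn\1$) to reconcile the action terms, and then cancels the remaining $f$-terms using the cocycle identity (\ref{f-2-cyc}). A convenient normalisation: since $u(1)=0$ forces $f(x,1)=f(1,y)=0$ and $\phi(1)=1$ from (\ref{phi-id})–(\ref{f-2-cyc}), the element $(0,1)$ is a two-sided identity, and one checks that the inverse of $(a,x)$ is $(-{}^{\phi(x)^{-1}}a + {}^{\phi(x)^{-1}}f(x,x^{-1}),\, x^{-1})$. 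This makes $B$ a group. The maps $\sigma(a,x)=x$ and $\kappa(a)=(a,1)$ are then immediately homomorphisms, $\kappa$ is injective with image exactly $\ker\sigma = M\times\{1\}$, and $\sigma$ is surjective, giving exactness.

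Next I would verify $\varrho(a,x) = \d(a)\phi(x)$ is a homomorphism: $\varrho((a,x)+(b,y)) = \d(a + {}^{\phi(x)}b - f(x,y))\,\phi(xy)$, and using $\d({}^{\phi(x)}b) = \phi(x)\d(b)\phi(x)\1$ and (\ref{phi-id}) in the form $\phi(xy) = \d(f(x,y))\phi(x)\phi(y)$, this collapses to $\d(a)\phi(x)\d(b)\phi(y) = \varrho(a,x)\varrho(b,y)$. Then $\varrho\kappa(a) = \varrho(a,1) = \d(a)\phi(1) = \d(a)$, so the left triangle commutes. For the outer commutativity, one computes $\zeta(a,x)(m) = (a,x)+(m,1)-(a,x)$; the $\Pi$-coordinate is $1$, so this lands in $M$, and a direct expansion (again using the crossed-module relation ${}^{\d(a)}m = ama\1$ and the formula for the inverse) yields $\zeta(a,x)(m) = {}^{\d(a)\phi(x)}m = \rho(\varrho(a,x))(m)$, as required. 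Finally one remarks that $b+m-b = \varrho(b)m$ holds automatically, completing the check that $(B,\kappa,\sigma,\varrho)$ is an $(M\xto{\d}L)$-extension.

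The main obstacle is the associativity computation in step (1): it is the one place where \emph{both} crossed-module axioms and \emph{both} cocycle identities (\ref{phi-id}) and (\ref{f-2-cyc}) are needed simultaneously, and the bookkeeping of the twisted-action terms ${}^{\phi(x)}(\cdot)$ must be done carefully — in particular the appearance of $\d(f(x,y))$ when one rewrites a nested action as a single action is the subtle point. Everything after that is a routine unwinding of definitions.
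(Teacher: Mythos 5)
Your proposal is correct and follows the same basic strategy as the paper (direct verification that the explicit data form an $(M\xto{\d}L)$-extension), but it is considerably more self-contained. The paper disposes of the group structure on $B=M\times\Pi$ in one line by citing Schreier's classical theorem (via MacLane's \emph{Homology}) and declares that ``the only non-trivial task'' is to check that $\varrho$ is a homomorphism --- a computation you reproduce essentially verbatim: substitute $\phi(xy)=\d(f(x,y))\phi(x)\phi(y)$ and use $\d({}^{z}a)=z\d(a)z\1$. What you do in addition is (a) re-derive the Schreier group law from scratch, correctly locating where (\ref{phi-id}), (\ref{f-2-cyc}) and both crossed-module axioms enter the associativity check, and (b) explicitly verify the commutativity of the triangle $\rho\circ\varrho=\zeta$, i.e.\ $(a,x)+(m,1)-(a,x)={}^{\d(a)\phi(x)}m$, which the paper leaves entirely implicit. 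Two small caveats. First, in your associativity sketch the rewriting of the nested action should read ${}^{\phi(x)}({}^{\phi(y)}c)={}^{\d(f(x,y))^{-1}\phi(xy)}c$ (the factors $\phi(xy)$ and $\d(f(x,y))^{-1}$ do not commute in general); the surrounding argument makes clear this is a transcription slip rather than an error in the computation. Second, the normalisations $\phi(1)=1$ and $f(x,1)=f(1,y)=0$ are not literally \emph{forced} by (\ref{phi-id}) and (\ref{f-2-cyc}) alone; they are standing conventions inherited from the choice $u(1)=0$ in the forward direction, and both your argument and the paper's implicitly assume them (without them the identity element of $B$ is $(f(1,1),1)$ rather than $(0,1)$, but the construction still yields a group). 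Neither point affects the validity of your proof.
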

\begin{proof} The fact that $B$ is indeed a group is a result of Schreier, see \cite{homology}. The only non-trivial task is to show that $\varrho$ is a homomorphism. This requires checkingk the following equality
$$\d (a)\phi(x)\d(b)\phi(y)=\d(a+\phi(x)b-f(x,y))\phi(xy).$$
Replacing $\phi(xy)$ by $\d(f(x,y))\phi(x)\phi(y)$ and using the fact that $\d$ is a homomorphism, the last equality is equivalent to
$$\d(a)\phi(x)\d(b)=\d(a)\d(\phi(x)b)\phi(x).$$
Now recall that $\d(za)=z\d(a)z\1$ for any $z\in L, a\in M$. Thus
$$\d(a)\d(\phi(x)b)\phi(x)=\d(a)\phi(x)\d(b)\phi(x)\1\phi(x)=\d(a)\phi(x)\d(b)$$
and the proof is finished.
\end{proof}

We observe that Lemma \ref{schreier1} says that for the functions $\phi$ and $f$, one has $(f,\phi)\in {\mathsf Z}^2(\Pi,M\xto{\d} L)$.

Our next aim is to translate the meaning of congruence and weak congruence in terms of the corresponding cocycles. 

\begin{Pro}\label{schreier2} Let $u:\Pi\to B$ and $u':\Pi\to B'$ be set sections of $\sigma:B\to \Pi$ and $\sigma':B'\to \Pi$ in the $(M\xto{\d} L)$-extensions (\ref{B}) and (\ref{B'}). Moreover, let $(f,\phi)$ and $(f,\phi')$ be the corresponding functions defined in Lemma \ref{schreier1}. 

\begin{enumerate}
 \item The extensions (\ref{B}) and (\ref{B'}) are congruent iff there exists a function $t:\Pi\to M$ such that
$$f'(x,y)=-t(xy)+f(x,y)+t(x)+\phi(x)(t(y))$$
and
$$\phi(x)=\d (t(x)) \phi(x)'.$$
 
 \item The extensions (\ref{B}) and (\ref{B'}) are weakly congruent iff there exists a function $t:\Pi\to M$ and an element $\tau\in L$ such that
$$f'(x,y)=-t(xy)+\tau f(x,y)+t(x)+\phi'(x)t(y)$$
and
$$\phi(x)=\tau\1\d t(x)\phi'(x) \tau.$$
\end{enumerate}
\end{Pro}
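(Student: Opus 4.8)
The plan is to deduce (1) from (2) by specialising to $\tau=1$, and to prove (2) in two directions: given a weak congruence $(\al,\tau)$ produce the function $t$; conversely, given $t$ and $\tau$ satisfying the two identities, reconstruct $\al$.

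\emph{From $(\al,\tau)$ to $t$.} Suppose $\al\colon B\to B'$ and $\tau\in L$ realise a weak congruence in the sense of Definition \ref{wextcr}. Because $\sigma'\al=\sigma$, for each $x\in\Pi$ the elements $\al(u(x))$ and $u'(x)$ have the same image under $\sigma'$, so there is a unique $t(x)\in M$ with $\al(u(x))=t(x)+u'(x)$; since $u(1)=u'(1)=0$ and $\al(0)=0$ we get $t(1)=0$. Applying $\varrho'$ to $\al(u(x))=t(x)+u'(x)$ and using $\varrho'\circ\kappa'=\d$, $\varrho'(u'(x))=\phi'(x)$ and (\ref{tauverrho}) gives $\phi(x)=\varrho(u(x))=\tau\1\varrho'(\al(u(x)))\tau=\tau\1\,\d(t(x))\,\phi'(x)\,\tau$, the second identity. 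For the first, I would compute $\al(u(xy))$ two ways. Directly it equals $t(xy)+u'(xy)$. On the other hand, $u(xy)=f(x,y)+u(x)+u(y)$, so applying the homomorphism $\al$, using that $\al$ restricts on $M$ to $m\mapsto{}^\tau m$, then pushing $u'(x)$ past $t(y)$ with (\ref{h-is-gan}) and rewriting $u'(x)+u'(y)=-f'(x,y)+u'(xy)$, one obtains ${}^\tau f(x,y)+t(x)+\phi'(x)t(y)-f'(x,y)+u'(xy)$. Cancelling $u'(xy)$ on the right and solving for $f'(x,y)$ gives $f'(x,y)=-t(xy)+{}^\tau f(x,y)+t(x)+\phi'(x)t(y)$, the first identity.

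\emph{From $t$ to $\al$.} Given $t\colon\Pi\to M$ and $\tau\in L$ satisfying the two identities, I would identify $B$ and $B'$ with the explicit models $M\times\Pi$ built from $(f,\phi)$ and $(f',\phi')$ as in \S\ref{23.16.08.20} (with sections $x\mapsto(0,x)$), and define $\al(a,x)=\bigl({}^\tau a+t(x),\,x\bigr)$. This visibly lies over $\id_\Pi$, restricts on $M$ to $\rho(\tau)$ (so the diagram of Definition \ref{wextcr} commutes), sends $u(x)$ to $t(x)+u'(x)$, and is bijective. It remains to verify that $\al$ is a homomorphism and satisfies (\ref{tauverrho}). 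Writing out $\al\bigl((a,x)+(b,y)\bigr)$ and $\al(a,x)+\al(b,y)$ in the two Schreier group laws and substituting the $f'$-identity, the homomorphism condition collapses to ${}^{\tau\phi(x)}b={}^{\d(t(x))\phi'(x)\tau}b$ for all $b\in M$, which follows from the $\phi$-identity in the form $\tau\phi(x)=\d(t(x))\phi'(x)\tau$ and the crossed-module rule ${}^{\d(n)}m=n+m-n$; and (\ref{tauverrho}) is a one-line check using $\varrho'(a,x)=\d(a)\phi'(x)$ and $\d({}^\tau a)=\tau\d(a)\tau\1$. Taking $\tau=1$ and ${}^\tau(-)=\id$ throughout yields statement (1).

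The argument contains no new idea beyond the classical Schreier manipulation; the real work is purely organisational. Every group here is written additively but is non-abelian, so one must track the order of terms with care, and the two crossed-module axioms together with the conjugation formula (\ref{h-is-gan}) have to be inserted at precisely the right places. The only point that needs a little foresight is the formula for $\al$ in the converse direction, which is dictated by the first half of the proof.
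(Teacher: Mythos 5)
Your proposal is correct and follows essentially the same route as the paper: define $t$ by $\alpha(u(x))=t(x)+u'(x)$, extract the two identities by computing $\alpha(u(xy))$ in two ways and applying $\varrho'$ together with (\ref{tauverrho}), and conversely define $\alpha(m+u(x))=\tau m+t(x)+u'(x)$ and reduce the homomorphism check to $\tau\phi(x)=\d(t(x))\phi'(x)\tau$ via the crossed-module axiom. Working in the explicit Schreier model $M\times\Pi$ rather than with the decomposition $b=m+u(x)$ is only a cosmetic difference.
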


\begin{proof} We will only prove part (ii). The first part is quite similar and can be achieved by assuming $\tau=id$ in the following computations.

Assume (\ref{B}) and (\ref{B'}) are weakly congruent. Thus there exists an isomorphism $\alpha:B\to B'$ and an element $\tau\in L$ for which the conditions of Definition \ref{wextcr} hold. Since $\sigma'\circ \alpha=\sigma$, it follows that there exists a unique function $t:\Pi\to M$ for which 
$$\alpha(u(x)) = t(x)+u'(x).$$
We recall that
$$u(xy)=f(x,y)+u(x)+u(y).$$
Apply $\alpha$ to obtain
$$t(xy)+u'(xy)=\tau f(x,y)+t(x)+u'(x)+t(y)+u'(y).$$
This can be rewritten,
$$t(xy)+f'(x,y)+u'(x)+u'(y)=\tau f(x,y)+t(x)+\phi'(x)t(y)+u'(x)+u'(y)$$
giving us
$$f'(x,y)=-t(xy)+\tau f(x,y)+t(x)+\phi'(x)t(y).$$
This is the first equality. To show the second one, recall that $\phi(x)=\varrho(u(x))$ and $\phi'(x)=\varrho'(u'(x))$.
Hence by (\ref{tauverrho}) we have
\begin{align*}
\phi(x)&=\varrho(u(x))\\ &=
\tau^{-1} \varrho'( \alpha(u(x))) \tau\\ &=\tau^{-1} \varrho'(t(x)+u'(x))\tau\\& =\tau^{-1}\d(t(x))\phi'(x)\tau
\end{align*}

Conversely, assume there exists a $t:\Pi\to G$ for which both equations hold. We have to show that the extensions  (\ref{B}) and (\ref{B'}) are congruent. Define the map $\alpha:B\to B'$ by $\alpha(m+u(x))=\tau m+t(x)+u'(x)$. Our first claim is that $\alpha$ is a homomorphism. Since 
$$(m+u(x))+(n+u(y))=(m+\phi(x)n-f(x,y)) +u(xy)$$
our claim is equivalent to the following equality
$$\tau m+t(x)+u'(x)+\tau n +t(y) +u'(y)=\tau m +\tau \phi(x) n -\tau f(x,y) +t(xy)+u'(xy).$$
After cancelling bu $\tau m$, the LHS equals to
$$t(x) +\phi'(x) \tau n +\phi'(x) t(y) -f'(x,y)+u'(xy)$$
Replacing $f'(x,y)$ by the quantity $-t(xy)+\tau f(x,y)+t(x)+\phi'(x)t(y)$
and cancelling $u'(xy)$, we see that the claim is equivalent to
$$t(x)+\phi'(x)\tau n -t(x)=\tau \phi(x) n $$
which easily follows from our assertion that $\tau \phi(x)=\d t(x)\phi'(x) \tau$ and from the fact that
$\d z m=z+m-z$.

Thus $\alpha:B\to B'$ is a homomorphism. Since  $\alpha(m+u(x))=\tau m+t(x)+u'(x)$, by taking $x=1$, we see that the restriction of $\alpha$ to $M$ is the function induced by the action of $\tau$. Thus, the diagram in Definition \ref{wextcr} commutes and as a consequence $\alpha$ is an isomorphism. 

Thus, the only thing left to show is the validity of the equality (\ref{tauverrho}). We take $b\in B$, which is of the form $b=m+u(x)$ for uniquely defined $m\in M$ and $x\in \Pi$. Then we have
\begin{align*}
\varrho'(\alpha(b))&=\varrho(\tau m+t(x)+v'(x))\\&=\d(\tau m)\d(t(x))\phi'(x).
\end{align*}
By assumption, $\d t(x)\phi'(x)=\tau \phi(x)\tau\1$. Hence,
\begin{align*}
\varrho'(\alpha(b))&=\d(\tau m)\tau \phi(x)\tau\1\\
&=\tau \d(m)\tau\1\tau \phi(x)\tau\1\\
&=\tau\d(m)\phi(x)\tau\1 \\
&=\tau \varrho (b)
\end{align*}

\end{proof}
\begin{Co} There are bijections
$${\bf Ext}_{\d}(\Pi,M)\cong  {\widehat{\pi}}^2(C^*(\Pi,M\xto{\d} L)).$$
and
$${\bf wExt}_{\d}(\Pi,M)\cong  \pi^2(C^*(\Pi,M\xto{\d} L)).$$
\end{Co}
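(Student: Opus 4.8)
The plan is to combine the cocycle description of $(M\xto{\d}L)$-extensions established in Lemmas \ref{schreier1}--\ref{schreier2} above with a direct comparison against the definitions of thick and ordinary $2$-cohomotopy given in Section \ref{Cohomotopy}. First I would recall that the cosimplicial crossed module $C^*(\Pi, M\xto{\d}L)$ in degree $n$ is $Maps(\Pi^n,M)\to Maps(\Pi^n,L)$. Hence a $2$-cocycle $(m_2,x_1)$ in the sense of Section \ref{Cohomotopy} is a pair of maps $f=m_2\colon\Pi^2\to M$ and $\phi=x_1\colon\Pi^1\to L$, and I would write out what the two defining equations $\dd^1x_1=\d^2m_2+\dd^2x_1+\dd^0x_1$ and $\dd^1m_2+\dd^3m_2=\dd^2m_2+{}^{\dd^2\dd^1x_1}\dd^0m_2$ become once the standard (group-cohomology) coface formulas are substituted. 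The first becomes exactly $\phi(xy)=\d(f(x,y))\phi(x)\phi(y)$, i.e.\ identity (\ref{phi-id}), and the second becomes exactly the twisted cocycle identity (\ref{f-2-cyc}) for $f$, because $\dd^2\dd^1x_1=x_1(\text{face stuff})$ evaluates so that the twisting $^{\dd^2\dd^1x_1}$ is precisely multiplication by $\phi(x)$. So ${\mathsf Z}^2(C^*(\Pi,M\xto{\d}L))$ is canonically the set of pairs $(f,\phi)$ satisfying (\ref{phi-id}) and (\ref{f-2-cyc}), which by Lemmas \ref{schreier1} and the construction lemma is in bijection with $(M\xto{\d}L)$-extensions equipped with a normalized section $u$.

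Next I would analyse the two equivalence relations. Changing the section $u$ to another section $u'$ of the \emph{same} extension is recorded by a function $t\colon\Pi\to M$ with $u'(x)=-t(x)+u(x)$ (or the sign convention forced by the formulas), and the induced change of cocycle is exactly the relation $\sim$ of thick $2$-cohomotopy: one checks $h^1=t$ makes $f'=-\dd^1h^1+f+\dd^2h^1+{}^{\dd^1x_1}\dd^0h^1$ and $\phi'=\d(h^1)+\phi$ hold, after translating cofaces. Then Proposition \ref{schreier2}(1) says two extensions are \emph{congruent} iff their cocycles (for any chosen sections) differ by such a $t$; so congruence classes of extensions $=$ orbits under $\sim$ together with the choice-of-section orbits, and these assemble to give $\widehat{\pi}^2$. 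For the second bijection, Proposition \ref{schreier2}(2) adds the extra datum $\tau\in L$, and I would match the pair $(t,\tau)$ with the pair $(h^1,z)$ appearing in the definition of $\simeq$: the equation $\phi(x)=\tau\1\d t(x)\phi'(x)\tau$ rearranges to $y^1=-\dd^1z+\d(h^1)+x^1+\dd^0z$ and $f'(x,y)=-t(xy)+\tau f(x,y)+t(x)+\phi'(x)t(y)$ rearranges to $m^2=-\dd^1h^1+{}^{\d^1z}n^2+\dd^2h^1+{}^{\dd^1x^1}\dd^0h^1$, again after substituting the constant cosimplicial operators in degree $0$ (so $\dd^0z$ and $\dd^1z$ are both the element $\tau$, or $\tau$ and $1$, depending on the indexing). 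Hence weak congruence classes $=$ ${\mathsf Z}^2/\!\simeq\;=\pi^2$.

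I would organize the write-up as: (a) identify ${\mathsf Z}^2(C^*(\Pi,M\xto{\d}L))$ with pairs $(f,\phi)$ as above by unwinding the coface maps; (b) invoke the three lemmas and the construction lemma to get a bijection between these pairs and pairs (extension, normalized section); (c) show the fibres of "forget the section" are precisely the $\sim$-classes, and by Proposition \ref{schreier2}(1) that passing further to congruence classes of extensions gives $\widehat{\pi}^2$; (d) show that adding $\tau$ upgrades $\sim$ to $\simeq$ and, by Proposition \ref{schreier2}(2), gives ${\bf wExt}_{\d}(\Pi,M)\cong\pi^2$. Compatibility with the canonical surjections ${\bf Ext}_{\d}\twoheadrightarrow{\bf wExt}_{\d}$ and $\widehat{\pi}^2\twoheadrightarrow\pi^2$ is then automatic from the construction. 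The main obstacle I anticipate is purely bookkeeping: verifying that the twisting term $^{\dd^2\dd^1x_1}\dd^0m_2$ in the $2$-cocycle condition, and the terms $^{\d^1z}n^2$, $\dd^0z$, $\dd^1z$ in the definition of $\simeq$, really do specialize to the factors $\phi(x)$, $\tau$ appearing in Proposition \ref{schreier2} — i.e.\ getting all the coface indices, the order of arguments $(x,y)$ versus $(y,z)$, and the left/right placement of the $L$-action to line up. Once the dictionary between the abstract cosimplicial formulas and the explicit Schreier formulas is pinned down, the corollary follows immediately; there is no conceptual difficulty beyond that bookkeeping, since all the real content is already in Lemma \ref{schreier1}, the construction lemma, and Proposition \ref{schreier2}.
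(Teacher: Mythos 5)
Your proposal is correct and follows essentially the same route as the paper: the paper proves no separate argument for this corollary but derives it exactly as you do, by observing (after Lemma \ref{schreier1} and the construction lemma) that normalized pairs $(f,\phi)$ attached to sections exhaust ${\mathsf Z}^2(C^*(\Pi,M\xto{\d}L))$ and then letting Proposition \ref{schreier2} identify congruence with $\sim$ and weak congruence with $\simeq$. The coface bookkeeping you flag (with the trivial $\Pi$-action, so that $\dd^0z=\dd^1z=\tau$ and the twist specializes to $\phi(x)$) is the only content not already in the cited results, and you have it right.
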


\begin{Co}\label{13.19.08.2020} Assume $\d:M\to L$ is a trivial homomorphism (so $M$ is an $L$-module). Then
$${\bf Ext}_{\d}(\Pi,M)\cong  \coprod_{\phi}H^2(\Pi,M_\phi)$$
where $\phi$ runs through the set of all homomorphisms $\Pi\to L$ and $M_\phi$ is a $\Pi$-module, which is $M$ as an abelian group and the action of $\Pi$ on $M$ is given by $x\cdot m:=\, ^{\phi(x)}m$.
\end{Co}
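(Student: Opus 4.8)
The plan is to unwind the previous Corollary in the special case $\d = 0$. Since $\d$ is trivial, the defining identity $\d({}^x m) = x\,\d(m)\,x^{-1}$ is automatic, and the crossed-module action of $L$ on $M$ makes $M$ an $L$-module; in particular $M$ is abelian, so we may freely use additive notation throughout. By the first Corollary, ${\bf Ext}_\d(\Pi,M) \cong \widehat{\pi}^2(C^*(\Pi, M\xto{\d} L))$, so it suffices to compute the right-hand side. A $2$-cocycle is a pair $(f,\phi)$ with $f\colon \Pi\times\Pi\to M$ and $\phi\colon \Pi\to L$ satisfying (\ref{phi-id}) and (\ref{f-2-cyc}). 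With $\d=0$, equation (\ref{phi-id}) becomes $\phi(xy) = \phi(x)\phi(y)$, i.e. $\phi$ is simply a group homomorphism $\Pi\to L$. Equation (\ref{f-2-cyc}) then reads $f(xy,z) + f(x,y) = f(x,yz) + {}^{\phi(x)}f(y,z)$, which is exactly the condition that $f$ be a normalised $2$-cocycle of $\Pi$ with coefficients in the $\Pi$-module $M_\phi$, where $\Pi$ acts via $x\cdot m = {}^{\phi(x)}m$. Hence $\mathsf{Z}^2(\Pi, M\xto{0}L)$ decomposes as the disjoint union over all homomorphisms $\phi\colon \Pi\to L$ of the sets $Z^2(\Pi, M_\phi)$ of ordinary normalised $2$-cocycles.

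Next I would analyse the equivalence relation $\sim$ defining $\widehat{\pi}^2$. Recall $(f,\phi)\sim(f',\phi')$ iff there is $t\colon\Pi\to M$ (the $h^1\in M^1 = \mathrm{Maps}(\Pi,M)$, written additively) with $\phi(x) = \d(t(x))\,\phi'(x)$ and $f'(x,y) = -t(xy) + f(x,y) + t(x) + {}^{\phi(x)}t(y)$ — this is precisely Proposition \ref{schreier2}(1). Since $\d=0$, the first relation forces $\phi = \phi'$: the equivalence relation does not mix different homomorphisms. Therefore the decomposition of the cocycle set passes to the quotient, giving $\widehat{\pi}^2(C^*(\Pi,M\xto{0}L)) \cong \coprod_\phi \big(Z^2(\Pi,M_\phi)/\!\sim_\phi\big)$. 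Within a fixed $\phi$, the relation becomes $f'(x,y) = f(x,y) - t(xy) + t(x) + {}^{\phi(x)}t(y)$, which (using the module action of $\Pi$ on $M_\phi$) is exactly the coboundary relation $f' = f + \delta t$ for the standard bar complex of $\Pi$ with coefficients in $M_\phi$. Hence $Z^2(\Pi,M_\phi)/\!\sim_\phi \;=\; H^2(\Pi, M_\phi)$, and assembling the pieces yields ${\bf Ext}_\d(\Pi,M) \cong \coprod_\phi H^2(\Pi, M_\phi)$.

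The computation is essentially a matter of transcription, so there is no deep obstacle; the one point that deserves care is matching signs and the placement of the module action between the cohomotopy formulas and the classical bar-complex conventions for group cohomology — in particular verifying that the coboundary $\delta t(x,y) = {}^{\phi(x)}t(y) - t(xy) + t(x)$ agrees, up to the chosen sign convention, with the formula appearing in Proposition \ref{schreier2}(1), and that the cocycle condition (\ref{f-2-cyc}) with $\d=0$ really is the standard $2$-cocycle identity rather than its mirror image. Once the conventions are pinned down (e.g. by citing \cite[Section IV.8]{homology} or \cite{sch} for the normalised bar resolution), the identification is immediate, and normalisation $u(1)=0$ ensures we land in the normalised cochain complex, which computes the same cohomology.
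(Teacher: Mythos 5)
Your proposal is correct and follows essentially the same route as the paper: the paper's proof is a one-line observation that with $\d=0$ the conditions (\ref{phi-id}), (\ref{f-2-cyc}) say $\phi$ is a homomorphism and $f$ is a $2$-cocycle in $C^*(\Pi,M_\phi)$, which is exactly your decomposition of the cocycle set. Your additional checks — that the equivalence relation of Proposition \ref{schreier2}(1) fixes $\phi$ when $\d=0$ and reduces to the standard coboundary relation on each piece — are the details the paper leaves implicit, and they are all accurate.
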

\begin{proof}
Since $\d$ is trivial, the conditions on the pair $(\phi,f)$ in \ref{phi-id}, \ref{f-2-cyc} say that $\phi$ is a group homomorphism (hence $M_\phi$ is a well defined $\Pi$-module) and $f$ is a $2$-cocycle in $C^*(\Pi, M_\phi)$. Hence the result. 
\end{proof}

\section{Equivariant Bouquets}

\subsection{Preliminaries on equivariant groupoids} All groupoids are small.

Let $G$ be a group. A \emph{$G$-groupoid} is a groupoid  object in the category of $G$-sets, or equivalently, it is a groupoid $\Gamma$ on which $G$ acts in the category of groupoids. More explicitly, for any $g\in G$ a functor $\Phi(g):\Gamma\to \Gamma$ is given. For an object $a$ of $\Gamma$, the value of $\Phi(g)$ on $a$ is denoted by $^ga$. Similarly, for a morphism $\alpha:a\to b$ of $\Gamma$, the value of the functor $\Phi(g)$ on $\alpha$ is denoted by $^g\alpha$. Thus, $^g\alpha:\,^ga\to \,^gb.$ By the definition of a functor we have
$$^g (id_a)=id_{\,^ga}, \quad {\rm and} \quad ^g(\alpha\circ \beta)=\, ^g\alpha\circ \,^ g\beta$$
for any composable morphisms $\alpha$ and $\beta$. 

One requires that $\Phi(1)$ is the identity functor and $\Phi(gh)=\Phi(g)\circ \Phi(h)$. Thus we have $$^{gh}a=\,^g(^ha) \quad {\rm and} \quad ^{gh}\alpha=\, ^g(^h\alpha).$$

Let $\Gamma$ and $\Gamma'$ be $G$-groupoids. A functor $F:\Gamma\to \Gamma'$ is called \emph{$G$-equivariant}, if
$$F(^g a)=\,^g\, F(a) \quad {\rm and} \quad F(^g\alpha)=\,^g\,F(\alpha).$$ 
Assume $F$ and $G$ are $G$-equivariant functors $\Gamma\to \Gamma'$. A natural isomorphism
$$\xi:F\Longrightarrow G$$
is called \emph{$G$-equivariant} if for any $g\in G$ and any object $a$ of $\Gamma$ one has 
$$\xi(^ga)=\, ^g\xi(a):\,^gF(a)\to \, ^gG(a).$$

A $G$-equivariant functor $F:\Gamma\to \Gamma'$ is called a \emph{$G$-equivalence} if there exists a $G$-equivariant functor $T:\Gamma'\to \Gamma$ and $G$-equivariant natural isomorphisms $FT\Longrightarrow Id_{\Gamma'}$ and $TF\Longrightarrow Id_{\Gamma}$. 

A $G$-equivariant functor $F:\Gamma\to \Gamma'$ is called a \emph{weak $G$-equivalence} if it is an equivalence of gropoids. It is well-known that this happens if and only if  $F$ is essentially surjective, full and faithful.

It is clear that any $G$-equivalence is a weak $G$-equivalence. 

Recall that a $G$-set $S$ is free if there exists a subset $X\subset S$ such that the map $G\times X \to S$ given by $(g,x)\mapsto \,^gx$ is a bijection. Such an $X$ is called a \emph{basis} of $S$.

\begin{Pro}\label{cofibrant} For any $G$-groupoid $\Gamma$ there exists a $G$-equivariant weak equivalence $\Gamma^c\to \Gamma$ such that the action of $G$ on the set of objects of $\Gamma^c$ is free.

\end{Pro}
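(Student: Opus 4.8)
The plan is to construct $\Gamma^c$ explicitly as a kind of "barycentric" or "free resolution" groupoid built from $\Gamma$ together with a free $G$-set mapping onto the objects of $\Gamma$. First I would choose a surjection of $G$-sets $p:E\to \mathrm{Ob}(\Gamma)$ with $E$ free; concretely one can take $E=G\times \mathrm{Ob}(\Gamma)$ with $G$ acting only on the first factor and $p(g,a)=\,^g a$, which is manifestly free with basis $\{1\}\times\mathrm{Ob}(\Gamma)$. Then I would define $\Gamma^c$ to be the groupoid whose object set is $E$ and whose morphisms $e\to e'$ are exactly the morphisms $p(e)\to p(e')$ in $\Gamma$, with composition inherited from $\Gamma$. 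In other words, $\Gamma^c$ is the pullback (iso-comma) groupoid $p^*\Gamma$, i.e. $\Gamma^c = E\times_{\mathrm{Ob}(\Gamma)}\Gamma\times_{\mathrm{Ob}(\Gamma)}E$ in the evident sense. The $G$-action on objects is the (free) action on $E$, and on a morphism $\alpha:e\to e'$ lying over $\beta:p(e)\to p(e')$ in $\Gamma$ one sets $^g\alpha$ to be the morphism $^ge\to{}^ge'$ lying over $^g\beta$; functoriality and the cocycle identities $^{gh}(-)=\,^g(^h(-))$ follow immediately from those in $\Gamma$ and the fact that $p$ is $G$-equivariant.

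Next I would define the forgetful functor $F:\Gamma^c\to\Gamma$ by $F(e)=p(e)$ on objects and $F(\alpha)=\beta$ (the underlying morphism of $\Gamma$) on morphisms. By construction $F$ is $G$-equivariant: $F(^ge)=p(^ge)=\,^g p(e)=\,^g F(e)$ and likewise on morphisms. I then need to check $F$ is a weak $G$-equivalence, i.e. an equivalence of underlying groupoids; by the criterion recalled just above the Proposition it suffices to show $F$ is essentially surjective, full and faithful. Essential surjectivity is immediate because $p$ is surjective, so every object of $\Gamma$ is literally in the image of $F$; fullness and faithfulness are immediate from the definition, since $\mathrm{Hom}_{\Gamma^c}(e,e')$ is by fiat equal to $\mathrm{Hom}_\Gamma(p(e),p(e'))$ and $F$ is the identity on these hom-sets.

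The only genuinely delicate point — and the step I expect to be the main obstacle — is making sure that the construction is self-consistent as a $G$-groupoid: the action of $g\in G$ on a morphism must be well-defined independently of how one presents that morphism, and one must verify $\Phi(1)=\mathrm{id}$ and $\Phi(gh)=\Phi(g)\Phi(h)$ strictly, not just up to isomorphism. With the "pullback groupoid" model this is clean because a morphism of $\Gamma^c$ is by definition a triple $(e,\beta,e')$ with $\beta\in\mathrm{Hom}_\Gamma(p(e),p(e'))$, and $g$ acts by $(e,\beta,e')\mapsto({}^ge,{}^g\beta,{}^ge')$; strict functoriality and strict compatibility with composition then reduce verbatim to the corresponding strict properties of the $G$-action on $\Gamma$. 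If instead one tried to build $\Gamma^c$ by "spreading out" objects of $\Gamma$ along a basis without remembering the map $p$, one would run into exactly this coherence issue, so recording $p$ as part of the data is what makes the argument go through. Finally, I would remark that $\Gamma^c$ is non-empty and connected whenever $\Gamma$ is (since $F$ is an equivalence), which is the property needed for the later application to bouquets.
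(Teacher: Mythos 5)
Your proposal is correct and follows essentially the same route as the paper: choose a free $G$-set surjecting equivariantly onto the objects, pull back the hom-sets along this surjection, and let $G$ act on morphisms via the action in $\Gamma$, so that the projection functor is a full, faithful, essentially surjective $G$-equivariant functor. The only difference is that you make the free $G$-set explicit as $G\times\mathrm{Ob}(\Gamma)$, whereas the paper leaves the choice abstract; this changes nothing in substance.
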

\begin{proof}  Choose a free $G$-set $S$ and a surjective $G$-map $\pi$ from $S$ to the set of objects of $\Gamma$. Define a $G$-groupoid $\Gamma^c$ as follows. The set of objects of $\Gamma^c$ is $X$. For $x,y\in S$ we set 
$$Hom_{\Gamma'}(x,y):=Hom_{\Gamma}(\pi(x),\pi(y)).$$
The composition is induced by the composition law in $\Gamma$.
If $\alpha:x\to y$ is a morphism, then the same $\alpha$ is a morphism $\pi(x)\to \pi(y)$. To distinguish them, we denote the second morphism by $\pi(\alpha)$. In this way we obtain a functor $\pi:\Gamma'\to \Gamma$, which is obviously an equivalence of categories.

The action of $G$ on the set of objects of $\Gamma'$ is the same as on $X$. We extend this action on morphisms as follows. Assume $\alpha:x\to y$ is a morphism in $\Gamma'$ and $g\in G$. Then $^g\alpha:\, ^gx\to \,^gy$ is the unique morphism such that
$$\pi(\,^g\alpha)=\, ^g\pi(\alpha).$$
In this way one obtains an action of $G$ on $\Gamma^c$. By our construction $\pi$ is $G$-equivariant and this finishes the proof.

 \end{proof}
 
  \subsection{Bouquets}
 
 A $G$-groupoid $\Gamma$ is called a $G$-\emph{bouquet} \cite{duskin} if it is nonempty and connected. 
 
It is well-known that when $G$ is trivial, any bouquet $\Gamma$ is equivalent to a one object category corresponding to a group $A$, where $A$ is the group of authomorphisms of an object of $\Gamma.$  

We would like to investigate to what extent this result is true for arbitrary $G$. We are interested in classifying $G$-groupoids up to $G$-equivariant weak equivalences.

We have the following easy but important fact.

\begin{Le} \label{uzravi} Let $\Gamma$ be a $G$-bouquet, such that the action of $G$ on the set of objects of $\Gamma$ has a fixed object $x$. Then $A=Aut_\Gamma(x)$ is a $G$-group, and the full subcategory corresponding to $x$ is a $G$-equivariant subcategory of $\Gamma$, denoted by $A$. Moreover, the inclusion $A\to \Gamma$ is a $G$-equivariant weak equivalence.
\end{Le}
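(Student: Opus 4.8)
The plan is to verify directly that the inclusion functor $\iota\colon A\hookrightarrow\Gamma$ — here $A$ denotes the full subcategory of $\Gamma$ on the single object $x$ — is (a) a well-defined $G$-equivariant subcategory, (b) a $G$-group when viewed as a one-object category, and (c) a weak $G$-equivalence, i.e. an ordinary equivalence of the underlying groupoids. For (a) and (b): since $x$ is a fixed point of the $G$-action on objects, each functor $\Phi(g)$ restricts to an endofunctor of the full subcategory on $x$, so $A$ is a $G$-equivariant subcategory; the composite $^g(-)$ on $\mathrm{Hom}_\Gamma(x,x)=A$ is then a group automorphism (it preserves composition and identities because $\Phi(g)$ is a functor), and $^{gh}(-)=\,^g(^h(-))$, $^1(-)=\mathrm{id}$, by the axioms listed for $G$-groupoids. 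Hence $A$ is a $G$-group. This part is essentially a matter of reading off the definitions.

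For (c), the content is that $\iota$ is essentially surjective, full and faithful. Fullness and faithfulness are immediate from the definition of a full subcategory: $\mathrm{Hom}_A(x,x)=\mathrm{Hom}_\Gamma(x,x)$. Essential surjectivity is where connectedness of the bouquet enters: for every object $a$ of $\Gamma$ there exists (at least one) morphism $a\to x$ in $\Gamma$, so every object is isomorphic to $x$; this is precisely the hypothesis that $\Gamma$ is a $G$-bouquet (nonempty and connected). Therefore $\iota$ is an equivalence of groupoids, which by the remark recalled just before the lemma means exactly that it is a weak $G$-equivalence. Note we do \emph{not} claim $\iota$ is a $G$-equivalence in the strong sense — producing a $G$-equivariant quasi-inverse would require choosing morphisms $a\to x$ compatibly with the $G$-action, which is not possible in general (and indeed is the whole point of the subsequent development); the lemma only asserts a weak $G$-equivalence, so no such choice is needed.

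The only mild subtlety — and the step I would be most careful about — is checking that the $G$-action on morphisms of $\Gamma$ restricts correctly to make $A\to\Gamma$ \emph{$G$-equivariant} as a functor of $G$-groupoids: one must confirm that for $\alpha\in\mathrm{Hom}_\Gamma(x,x)$ the morphism $^g\alpha$ again lies in $\mathrm{Hom}_\Gamma(x,x)=\mathrm{Hom}_A(x,x)$, which holds because $^g\alpha\colon{}^gx\to{}^gx$ and $^gx=x$. Once that is in place, $\iota(^g\alpha)=\,^g\alpha=\,^g\iota(\alpha)$ and $\iota(^ga)=\iota(x)=x=\,^gx=\,^g\iota(a)$ are trivially verified, so $\iota$ is $G$-equivariant. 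Combining this with essential surjectivity, fullness and faithfulness from the previous paragraph completes the proof.
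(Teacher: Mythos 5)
Your proposal is correct and follows essentially the same route as the paper's own proof: observe that $^g\alpha:{}^gx\to{}^gx$ with $^gx=x$ shows the $G$-action restricts to $A=\mathrm{Aut}_\Gamma(x)$, making $A$ a $G$-group and a $G$-equivariant full subcategory, and then use connectedness of the bouquet for essential surjectivity (fullness and faithfulness being automatic) to conclude the inclusion is a weak $G$-equivalence. Your added remark that only a \emph{weak} equivalence is claimed, since a $G$-equivariant quasi-inverse need not exist, is a useful clarification consistent with the paper's subsequent development.
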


\begin{proof} Take $\alpha\in A$ and $g\in G$. So $\alpha:x\to x$ is a morphism. Hence $^g\alpha:\,^gc\to \,^gx$ is also a morphism, but $^gx=x$. Hence $^g\alpha\in A$. In this way we obtain the action of $G$ on $A$. Hence $A$ considered as a full subcategory of $\Gamma$ corresponding to $x$ is a $G$-equivariant subcategory. Since $\Gamma$ is connected, the inclusion is a weak equivalence and we are done.
\end{proof}

\subsection{Functions $\vartheta_{g,\lambda}$}

Unfortunately, the condition of Lemma \ref{uzravi} does not hold in many cases. By Lemma \ref{cofibrant} we can and will assume that the action of $G$ on the set of a $G$-bouquet $\Gamma$ is free. We will fix an object $x$ of $\Gamma$ and denote by $M$ the group of automorphisms of $x$ in $\Gamma$, so $M=Aut_{\Gamma}(x)$.

For any pair $(g,\lambda)$, where $g\in G$ and $\lambda:\, ^g x\to x$  is a morphism in $\Gamma$, we can consider the map $$\vartheta_{g,\lambda}:M\to M$$
given as follows. Take $M\ni \alpha:x\to x$ and consider the composite of morphisms
$$x\xto{\lambda^{-1}} \, ^gx\xto{\,^g\alpha} \,^gx\xto{\lambda } x$$
which is denoted by $\vartheta_{g,\lambda}(\alpha)$. Thus

\begin{equation}\label{1}
\vartheta_{g,\lambda}(\alpha)=\lambda\circ \, ^g\alpha\circ \lambda^{-1}.\end{equation}
Since $^g (id)=id$, we see that 
\begin{equation}\label{1'} \vartheta_{g,\lambda}(id)=id_A.\end{equation}

From the equality (\ref{1}) we obtain
\begin{equation}\label{2}
^g\alpha=\lambda ^{-1}\circ \vartheta_{g,\lambda}(\alpha)\circ \lambda.\end{equation}
If $\lambda'$ is also a morphism $^gx\to X$, then for $s=\lambda'\circ \lambda$ we have 
\begin{equation}\label{17}
\vartheta _{g,\lambda'}=\, ^{\d(s)}\vartheta_{g,\lambda}(\alpha).
\end{equation}

\begin{Le}\label{theta-g-lambda} The map $\vartheta_{g,\lambda}:M\to M$ is a group automorphism.
\end{Le}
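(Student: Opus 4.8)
The statement to prove is that $\vartheta_{g,\lambda}\colon M\to M$ is a group automorphism, where $\vartheta_{g,\lambda}(\alpha)=\lambda\circ\,^g\alpha\circ\lambda^{-1}$ for $\alpha\in M=\mathrm{Aut}_\Gamma(x)$ and $\lambda\colon\,^gx\to x$.

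My plan is to verify three things: that $\vartheta_{g,\lambda}$ lands in $M$, that it is a group homomorphism, and that it is bijective. For the first point, given $\alpha\colon x\to x$, functoriality of $\Phi(g)$ gives $^g\alpha\colon\,^gx\to\,^gx$, and composing with $\lambda^{-1}\colon x\to\,^gx$ on the right and $\lambda\colon\,^gx\to x$ on the left yields a morphism $x\to x$, i.e.\ an element of $M$; this is essentially just reading off the composite in~(\ref{1}).

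For the homomorphism property, I would compute $\vartheta_{g,\lambda}(\alpha\circ\beta)$ directly: using $^g(\alpha\circ\beta)=\,^g\alpha\circ\,^g\beta$ (functoriality, already recorded in the text), we get
$$\vartheta_{g,\lambda}(\alpha\circ\beta)=\lambda\circ\,^g\alpha\circ\,^g\beta\circ\lambda^{-1}=\lambda\circ\,^g\alpha\circ\lambda^{-1}\circ\lambda\circ\,^g\beta\circ\lambda^{-1}=\vartheta_{g,\lambda}(\alpha)\circ\vartheta_{g,\lambda}(\beta),$$
after inserting $\lambda^{-1}\circ\lambda=\mathrm{id}_{\,^gx}$ in the middle. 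Together with $\vartheta_{g,\lambda}(\mathrm{id})=\mathrm{id}$ from~(\ref{1'}), this shows $\vartheta_{g,\lambda}$ is a group homomorphism.

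For bijectivity, the cleanest route is to exhibit an explicit two-sided inverse. The natural candidate is built from $\lambda^{-1}\colon x\to\,^gx$ together with the inverse of $\Phi(g)$, namely $\Phi(g^{-1})=\Phi(g)^{-1}$: define $\psi\colon M\to M$ by $\psi(\beta)=\,^{g^{-1}}(\lambda^{-1}\circ\beta\circ\lambda)$, noting $\lambda^{-1}\circ\beta\circ\lambda\colon\,^gx\to\,^gx$ so that applying $\Phi(g^{-1})$ gives a morphism $x\to x$ using $^{g^{-1}}(^gx)=x$. A short calculation, applying $^g(-)$ to $\psi(\beta)$ and using $^g(^{g^{-1}}-)=\mathrm{id}$, shows $\vartheta_{g,\lambda}\circ\psi=\mathrm{id}_M$ and $\psi\circ\vartheta_{g,\lambda}=\mathrm{id}_M$. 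Hence $\vartheta_{g,\lambda}$ is bijective, and being a bijective homomorphism, it is an automorphism. I expect no real obstacle here; the only point requiring care is bookkeeping the source and target objects of each morphism so that the composites are well-defined, and remembering to use $\Phi(g)^{-1}=\Phi(g^{-1})$ rather than trying to invert $\Phi(g)$ by hand.
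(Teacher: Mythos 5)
Your proof is correct. The homomorphism computation is exactly the one in the paper. Where you differ is in the bijectivity step: the paper picks an auxiliary morphism $\mu:\,^{g^{-1}}x\to x$ (not a priori related to $\lambda$), shows that the composite $\vartheta_{g^{-1},\mu}\circ\vartheta_{g,\lambda}$ equals the inner automorphism $\iota_\epsilon$ with $\epsilon=\mu\circ\,^{g^{-1}}\lambda$, and then extracts injectivity/surjectivity from the fact that this composite is bijective (applying the same argument with $g$ replaced by $g^{-1}$). Your map $\psi(\beta)=\,^{g^{-1}}(\lambda^{-1}\circ\beta\circ\lambda)$ is precisely $\vartheta_{g^{-1},\mu}$ for the particular choice $\mu=\,^{g^{-1}}(\lambda^{-1})$, for which $\epsilon=\,^{g^{-1}}(\lambda^{-1}\circ\lambda)=\mathrm{id}$, so the inner automorphism degenerates to the identity and you get an honest two-sided inverse. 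This is a genuine simplification: it avoids the injectivity-versus-surjectivity bookkeeping (which the paper in fact states the wrong way round, though its conclusion survives by symmetry), at the small cost of having to verify that $\lambda^{-1}\circ\beta\circ\lambda$ is an endomorphism of $\,^gx$ so that $\Phi(g^{-1})$ can be applied --- which you do. The paper's more general computation with arbitrary $\mu$ is not wasted, however: it is reused as equation (\ref{17}) territory and in the later discussion of how $\vartheta_{g,\lambda}$ depends on $\lambda$ up to inner automorphisms, which is what makes Definition \ref{351} sensible.
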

\begin{proof} First we show that it is a group homomorphism. This requires to show that for any $\alpha,\beta\in M$ one has an equality
$$\vartheta_{g,\lambda}(\alpha\circ \beta)=\vartheta_{g,\lambda}(\alpha)\circ \vartheta_{g,\lambda}(\beta).$$
We have
 \begin{align*}
\vartheta_{g,\lambda}(\alpha)\circ \vartheta_{g,\lambda}(\beta) &= \lambda \circ \, ^g\alpha\circ \lambda^{-1}\circ \lambda\circ \, ^g\beta\circ \lambda^{-1}\\
&= \lambda\circ \, ^g\alpha\circ ^g\beta \circ \lambda^{-1}\\
&= \vartheta_{g,\lambda}(\alpha\circ \beta).
\end{align*}
The next aim is to show that, in fact, $\vartheta_{g,\lambda}$ is an automorphism of $M$. To this end, let us look at a morphism  $\lambda:\, ^gx\to x$ and act on it by $g^{-1}$ to obtain the morphism $^{g^{-1}}\lambda:x\to\, ^{g^{-1}}x$. We also have a morphism $\mu:\, ^{g^{-1}}x\to x$. Hence the composite $ \mu \circ \, ^{g^{-1}} \lambda:x\to x$ is an element of $M$, which is denoted by $\epsilon$. 
We claim that the composite map 
$$M\xto{\vartheta_{g,\lambda} }M\xto{\vartheta_{g^{-1},\mu}}M$$
equals to $\iota_\epsilon$, where $\iota_\epsilon :M\to M$ is the inner automorphism 
$$\iota_\epsilon(x)=\epsilon x \epsilon^{-1}.$$
In fact, for any $\alpha\in M$, we have 
$$\vartheta_{g^{-1}, \mu} \vartheta_{g, \lambda} (\alpha)=\mu\circ \, ^{g^{-1}}(\lambda \circ \,^g \alpha \circ \lambda^{-1})\circ \mu^{-1} =\epsilon\alpha\epsilon ^{-1} .$$

Now, we are in the position to show that the map $\vartheta_{g,\lambda}$ is bijective. 
By Claim the map $\vartheta_{g,\lambda}$ is surjective and $\vartheta_{g^{-1}, \mu}$  is injective. Replacing $g$ by $g^{-1}$ we see that  $\vartheta_{g,\lambda}$ is bijective.
\end{proof} 

\subsection{Group extensions and equivariant bouquets} 

Let $\Gamma$ be a $G$-bouquet. We fix an object $x\in \Gamma$. This is possible because $\Gamma$ is non-empty.
We set $M=Aut_\Gamma(x)$. 

Denote by $G^x_\Gamma$, or simply $G_\Gamma$, the collection of all pairs $(\lambda, g)$, where $g\in G$ and $\lambda:\,^gx\to x$ is a morphism in $\Gamma$. Since $\Gamma$ is connected, the map $p:G_\Gamma\to \Gamma$ given by $p(\lambda,g)=g$ is surjective. 

We now define a binary operation $\bullet$ on $G_\Gamma$ by
$$(\lambda ,g)\bullet (\mu,h)=(\lambda \ ^g\mu, gh).$$
In other words, if $\lambda:\, ^gx\to x$ and $\mu:\, ^hx\to x$ are morphisms, then we consider the composite map
$$^{gh}x\xto{^g\mu}\, ^gx\xto{\lambda} x.
$$

\begin{Le} The operation $\bullet$ defines a group structure on $G_\Gamma$. Moreover, we have a group extension
\begin{equation}\label{canext}
0\to M\xto{\kappa} G_\Gamma^x\xto{p} G\to 0
\end{equation}
where $\kappa(m)=(m,1)$.
\end{Le}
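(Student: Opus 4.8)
The plan is to verify directly that $\bullet$ is a group operation and then exhibit $\kappa$ and $p$ as the required maps. First I would check associativity: for $(\lambda,g),(\mu,h),(\nu,k)$ both bracketings of the triple product give the pair whose first coordinate is $\lambda\circ{}^g\mu\circ{}^{gh}\nu$ and whose second coordinate is $ghk$; the only thing being used is the functoriality identity ${}^g({}^h\nu)={}^{gh}\nu$ together with ${}^g(\alpha\circ\beta)={}^g\alpha\circ{}^g\beta$ applied to $\lambda\circ{}^g\mu$. The identity element is $(\mathrm{id}_x,1)$, and the fact that $\Phi(1)=\mathrm{id}$ makes $(\mathrm{id}_x,1)\bullet(\mu,h)=(\mathrm{id}_x\circ{}^1\mu,h)=(\mu,h)$ and similarly on the other side. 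For inverses I would take $(\lambda,g)$ and propose $({}^{g^{-1}}(\lambda^{-1}),g^{-1})$: its $\bullet$-product with $(\lambda,g)$ has second coordinate $g^{-1}g=1$ and first coordinate ${}^{g^{-1}}(\lambda^{-1})\circ{}^{g^{-1}}\lambda={}^{g^{-1}}(\lambda^{-1}\circ\lambda)={}^{g^{-1}}(\mathrm{id})=\mathrm{id}$, and symmetrically on the other side; here I use that ${}^g\alpha$ is invertible with $({}^g\alpha)^{-1}={}^g(\alpha^{-1})$, which is Lemma on $G$-groupoids (functors preserve isomorphisms). This proves $G_\Gamma^x$ is a group.

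Next I would check that $p$ is a group homomorphism — immediate from the definition of $\bullet$, since the second coordinate of the product is $gh$ — and that it is surjective, which is exactly the statement already noted before the lemma: connectedness of $\Gamma$ guarantees for each $g\in G$ a morphism ${}^gx\to x$. Then I would verify that $\kappa(m)=(m,1)$ is a homomorphism $M\to G_\Gamma^x$: $(m,1)\bullet(n,1)=(m\circ{}^1n,1)=(m\circ n,1)$ since $\Phi(1)=\mathrm{id}$, so $\kappa$ is indeed a homomorphism (with $M$ written additively, $m\circ n$ corresponds to $m+n$). Injectivity of $\kappa$ is clear. Finally I would identify the kernel of $p$ with the image of $\kappa$: a pair $(\lambda,g)$ lies in $\ker p$ iff $g=1$, in which case $\lambda:{}^1x=x\to x$ is an element of $M$ and $(\lambda,1)=\kappa(\lambda)$; conversely every $\kappa(m)$ has second coordinate $1$. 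Hence $(\ref{canext})$ is exact.

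I do not anticipate a genuine obstacle here — the content is entirely the bookkeeping of composing morphisms of the form ${}^gx\to x$ and keeping track of where the $G$-action functoriality ($\Phi(gh)=\Phi(g)\Phi(h)$, $\Phi(1)=\mathrm{id}$) and functor axioms (preservation of composition, identities and inverses) are invoked. The one place to be mildly careful is the mixed additive/composition notation: $M=Aut_\Gamma(x)$ is being treated additively elsewhere in the paper, so I would state explicitly that the group law on $M$ is composition of automorphisms and note that $\kappa$ is a homomorphism for that law. Everything else is a routine check, so I would write it out compactly, displaying the associativity computation and the inverse computation and leaving the rest as one or two lines.
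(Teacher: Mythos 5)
Your proposal is correct and follows the same direct-verification route as the paper, which only writes out the associativity computation (using ${}^{g}({}^{h}\nu)={}^{gh}\nu$ and ${}^{g}(\alpha\circ\beta)={}^{g}\alpha\circ{}^{g}\beta$) and declares the identity, inverses and exactness "obvious." You have simply filled in those omitted routine checks, and your formulas for the identity $(\mathrm{id}_x,1)$ and the inverse $({}^{g^{-1}}(\lambda^{-1}),g^{-1})$ are exactly what the paper leaves implicit.
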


\begin{proof}
We will only prove the associativity of $\bullet$. The rest is obvious.
We have
$$((\lambda_1 ,g_1)\bullet (\lambda_2 ,g_2))\bullet (\lambda_3 ,g_3)=(\lambda_1\, ^{g_1}\lambda_2, g_1g_2)\bullet (\lambda_3 ,g_3)=(\lambda_1\, ^{g_1}\lambda_2\, ^{g_1g_2}\lambda_3,g_1g_2g_3).
$$
On the other hand,
$$(\lambda_1 ,g_1)\bullet ((\lambda_2 ,g_2)\bullet (\lambda_3 ,g_3))=(\lambda_1 ,g_1)\bullet (\lambda_2\,^{g_2}\lambda_3,g_2g_3)=(\lambda_1\, ^{g_1}\lambda_2\, ^{g_1g_2}\lambda_3,g_1g_2g_3)
$$
and the associativity property follows.
\end{proof}

Now, we describe the inverse process. Starting with a group extension
$$1\to M\xto{\kappa}B\xto{\pi}G\to 1$$
we construct a groupoid $\Gamma(B)$, or simply $\Gamma$, as follows. The set of objects of $\Gamma$ is $G$. A morphism from $f$ to $g$, for $f,g\in G$, is a triple $(f,b, g)$, where $b\in B$ is an element such that $\pi(b)=g\1f$. If $(g,c, h)$ is also a morphism, then the composite of morphisms
$$f\xto{(f,b,g)} g\xto{(g,c,h)} h$$
is the triple $(f,cb, h)$. This is well-defined, since
$\pi(cb)=\pi(c)\pi(b)=h\1 g g\1f=h\1f.$
The action of the group $G$ on $\Gamma(B)$ is defined as follows. If $h\in G$, then the action of $h$ on the set of objects of $B(\Gamma)$ is given by $(h,g)\mapsto hg$, while on morphisms it is given by
$$^h(f,b, g)=(hf,b,hg).$$

We now show that these constructions are mutually inverese to each other. 
In fact, having the $G$-bouquet $\Gamma(B)$, we can choose $x=1$ as an object of $\Gamma(B)$. By our construction, the morphisms $1\to 1$ in $\Gamma(B)$ are triples $(1,b,1)$, such that $\pi(b)=1$. Thus, we have a conical 
identification
$M=Aut_{\Gamma(B)}(1)$. Now, the elements of the group $B_{\Gamma(B)}$ are tuples $(g,(g,b,1))$ with the property $\pi(b)=g$. One easily sees that
$( g,(g,b,1)))\mapsto b$ defines an isomorphism of groups $B_{\Gamma(B)}\to B$, inducing equivalences  of corresponding extensions.

Quite similarly, starting with a $G$-bouquet $\Gamma$, we can consider the 
$G$-bouquet $\Gamma(B_\Gamma)$. The objects are elements of $G$. For $f,g\in G$, a morphism from $f$ to $g$ is a triple $(f,\lambda,g)$, where $\lambda:\, ^{g\1f}x\to x$ is a morphism of $\Gamma$. Then the functor
$$\Psi:\Gamma(B_\Gamma^x)\to \Gamma$$
which on objects is given by $\Psi(g)=\, ^gx$, $g\in G$ and on morphisms by
$$\Psi(f,\lambda,g)=\, ^g\lambda$$
is a $G$-equiavriant weak equivalence.

\subsection{Equivarint bouquets over crossed modules} \label{eqbiucr}
 Let $\d:M\to L$ be a crossed module. Unlike in the previous section, we use multiplicative notation. 
 \begin{De}\label{351} We will say that a $G$-equivariant bouquet $\Gamma$ is defined over $\d:M\to L$ if an object $a$ of $\Gamma$ is given, as well as an isomorphism $\eta:M\to Aut_\Gamma(a)$. Moreover, for any pair $(g,\lambda)$, where $g\in G$ and $\lambda:\, ^g a\to a$ is a morphism in $\Gamma$, an element $\theta_{g,\lambda} \in L$ is given, such that $\vartheta_{1,\eta(m)}=\d(m)$ and 
 $\vartheta_{g,\lambda}=\rho(\theta_{g,\lambda})$.
 \end{De}
In this case we say that $(\Gamma, a, \eta, \theta)$ is a $G$-equivariant bouquet defined over $\d:M\to L$.

 In what follows, we identify $M$ and $ Aut_\Gamma(a)$ via $\eta$. The last condition of Definition \ref{351} is equivalent to the identity   
 $$\lambda\circ \,^g\alpha= \,^{\theta_{g, \lambda }}\alpha\lambda, \  \alpha\in Aut_\Gamma(a).$$
 
Let $(\Gamma, a,\eta, \theta)$ and $(\Gamma', a', \eta', \theta')$ be $G$-equivariant bouquets defined over $\d:M\to L$. A $G$-equivariant weak equivalence $F:\Gamma\to \Gamma'$ is defined over \emph{$\d:M\to L$}  if $F(a)=a'$, $F(\eta(m))=\eta'(m)$ for all $m\in M$, and for any $\lambda:\,^ga\to a$ one has $\theta_{g,\lambda}=\theta'_{g,F(\lambda)}$.

The following fact is a direct corollary of Lemma \ref{theta-g-lambda}. 
\begin{Co} If $\Gamma$ is a bouquet, $a$ is an object of $\Gamma$ and $M=Aut_\Gamma(a)$, then $\Gamma$ is defined over $\iota: M\to Aut(M)$ in a unique way.
\end{Co}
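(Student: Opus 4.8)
The plan is to unwind the definitions and apply Lemma~\ref{theta-g-lambda} directly. Given a bouquet $\Gamma$, an object $a$, and $M=Aut_\Gamma(a)$, I must produce the data of Definition~\ref{351} for the crossed module $\iota\colon M\to Aut(M)$, show it satisfies the two required identities, and show the choice is forced. First I would take $\eta\colon M\to Aut_\Gamma(a)$ to be the identity map (this is the only sensible choice, and indeed any choice of $\eta$ here must be $\id$ once we declare $M=Aut_\Gamma(a)$). Then for each pair $(g,\lambda)$ with $\lambda\colon{}^ga\to a$, the map $\vartheta_{g,\lambda}\colon M\to M$ defined in~(\ref{1}) is, by Lemma~\ref{theta-g-lambda}, a group automorphism of $M$; so I simply set $\theta_{g,\lambda}:=\vartheta_{g,\lambda}\in Aut(M)=L$.

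Next I would verify the two conditions of Definition~\ref{351}. For $\vartheta_{1,\eta(m)}=\d(m)$: here $g=1$, $\eta(m)=m\colon a\to a$ plays the role of $\lambda$, and by~(\ref{1}) we get $\vartheta_{1,m}(\alpha)=m\circ{}^1\alpha\circ m^{-1}=m\alpha m^{-1}=\iota(m)(\alpha)$, which is exactly $\d(m)=\iota(m)$ for the crossed module $M\to Aut(M)$. For the condition $\vartheta_{g,\lambda}=\rho(\theta_{g,\lambda})$: the structure map $\rho\colon L\to Aut(M)$ of the crossed module $M\xto{\iota}Aut(M)$ is the identity of $Aut(M)$ (this was noted in the excerpt just after~(\ref{nonab=A}), and follows since the action of $Aut(M)$ on $M$ is the tautological one), so $\rho(\theta_{g,\lambda})=\theta_{g,\lambda}=\vartheta_{g,\lambda}$ by our definition. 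Both identities hold, so $(\Gamma,a,\id,\vartheta)$ is a bouquet defined over $\iota\colon M\to Aut(M)$.

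Finally I would address uniqueness. The object $a$ is the one given, $M$ is its automorphism group, and $\eta$ is forced to be $\id$. The element $\theta_{g,\lambda}\in L=Aut(M)$ is required to satisfy $\rho(\theta_{g,\lambda})=\vartheta_{g,\lambda}$; since $\rho=\id_{Aut(M)}$ here, this says $\theta_{g,\lambda}=\vartheta_{g,\lambda}$, with no freedom of choice. Hence the structure of a bouquet over $\iota\colon M\to Aut(M)$ on $\Gamma$ exists and is unique.

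There is no real obstacle: the content is entirely in Lemma~\ref{theta-g-lambda} (that $\vartheta_{g,\lambda}$ is an automorphism, so that it lands in $L=Aut(M)$) together with the observation that $\rho=\id$ for the canonical crossed module. The only mild point of care is checking that the various compatibilities bundled into Definition~\ref{351} -- in particular the equivalent reformulation $\lambda\circ{}^g\alpha={}^{\theta_{g,\lambda}}\alpha\circ\lambda$ -- are genuinely the content of~(\ref{1}) rewritten, and that no additional cocycle-type condition on $\theta$ is demanded by the definition beyond the two displayed identities; a quick inspection of Definition~\ref{351} confirms this.
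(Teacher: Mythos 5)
Your proposal is correct and matches the paper's intent exactly: the paper gives no written proof, declaring the corollary a direct consequence of Lemma \ref{theta-g-lambda}, and your argument is precisely the unwinding of that claim (set $\theta_{g,\lambda}=\vartheta_{g,\lambda}$, note $\d=\iota$ gives $\vartheta_{1,\eta(m)}=\iota(m)$, and observe that $\rho=\id_{Aut(M)}$ forces both existence and uniqueness). Nothing further is needed.
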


Now we relate the Definitions \ref{crext} and \ref{351}. To this end, we consider the  extension (\ref{canext}) associated to a $G$-equivariant bouquet $\Gamma$ at $x\in Ob(\Gamma)$:
$$1\to M\to B_\Gamma\to G\to 1$$
where $B_\Gamma$ is the collection of all pairs $(g,\lambda:\,^gx\to x)$. Recall that a structure of a $(M\xto{\d} L)$-extension on it is given by the homomorphism $\varrho:B_\Gamma\to L$ satisfying certain conditions. By declaring $\theta_{g,\lambda}$ to be the value of $\varrho$ on $(g,\lambda)$, we have the following fact.

\begin{Le}\label{17a} Let $\d:M\to L$ be a crossed module and let $\Gamma$ be a $G$-equivariant bouquet, $x$ be an object of $\Gamma$ and $M=Aut_\Gamma(x)$. Then $\Gamma$ is defined over $(M\xto{\d} L)$ iff the group extension (\ref{canext}) is a $(M\xto{\d} L)$-extension. 
\end{Le}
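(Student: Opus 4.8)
The statement is essentially an unwinding of definitions, so the plan is to carefully transport the structure of Definition \ref{351} back and forth across the identification $B_\Gamma = G_\Gamma^x$ provided by the extension (\ref{canext}). First I would recall that an $(M\xto{\d}L)$-extension structure on (\ref{canext}) is, by Definition \ref{crext}, exactly a homomorphism $\varrho:B_\Gamma\to L$ making the triangle with $\zeta:B_\Gamma\to Aut(M)$ (where $\zeta(b)(m)=b+m-b$) and $\rho:L\to Aut(M)$ commute, together with $\varrho\circ\kappa=\d$. On the other side, a definition of $\Gamma$ over $M\xto{\d}L$ is a choice, for every pair $(g,\lambda)$ with $\lambda:\,^gx\to x$, of an element $\theta_{g,\lambda}\in L$ satisfying $\vartheta_{1,\eta(m)}=\d(m)$ and $\vartheta_{g,\lambda}=\rho(\theta_{g,\lambda})$. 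Since the underlying set of $B_\Gamma$ is precisely the set of such pairs $(g,\lambda)$, assigning $\theta$ is literally the same as assigning a set-map $\varrho:B_\Gamma\to L$; so the content of the lemma is that the \emph{axioms} on the two sides match up.

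\textbf{Forward direction.} Assuming $\Gamma$ is defined over $M\xto{\d}L$ via $\theta$, set $\varrho(g,\lambda):=\theta_{g,\lambda}$. I would check: (i) $\varrho$ is a homomorphism for the group law $\bu$ on $B_\Gamma$; (ii) $\varrho\circ\kappa=\d$; (iii) $\rho\circ\varrho=\zeta$. For (ii), $\kappa(m)=(1,\eta(m))$ and the hypothesis $\vartheta_{1,\eta(m)}=\d(m)$ together with $\vartheta_{1,\eta(m)}=\rho(\theta_{1,\eta(m)})$ forces (after noting injectivity is not needed — one reads off $\varrho(\kappa(m))=\theta_{1,\eta(m)}$ and the equation $\rho(\theta_{1,\eta(m)})=\rho(\d(m))$, so one must actually define $\theta_{1,\eta(m)}=\d(m)$, which is what the first displayed condition says). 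For (iii), the identity $\lambda\circ\,^g\alpha=\,^{\theta_{g,\lambda}}\alpha\circ\lambda$ recorded just after Definition \ref{351} rearranges to $\vartheta_{g,\lambda}(\alpha)=\lambda\circ\,^g\alpha\circ\lambda\1=\,^{\theta_{g,\lambda}}\alpha$, i.e. $\vartheta_{g,\lambda}=\rho(\theta_{g,\lambda})$; and $\zeta(g,\lambda)(m)$, computed via the group law $\bu$, is exactly conjugation of $\kappa(m)$ by $(g,\lambda)$, which one checks equals $\vartheta_{g,\lambda}(m)$. The homomorphism property (i) is the one genuine computation: using $(\lambda,g)\bu(\mu,h)=(\lambda\,{}^g\mu,gh)$ one must verify $\theta_{g,\lambda}\theta_{h,\mu}=\theta_{gh,\,\lambda\,{}^g\mu}$; this follows because both sides induce, via $\rho$, the automorphism $\vartheta_{g,\lambda}\circ\vartheta_{h,\mu}=\vartheta_{gh,\lambda\,{}^g\mu}$ (the cocycle-type identity for $\vartheta$, immediate from (\ref{1})), and because one can pin down the actual $L$-element rather than just its image in $Aut(M)$ by the defining conditions — here one uses that $\theta$ is \emph{given} as data, so the homomorphism identity must be imposed; strictly, the correct reading is that "defined over $M\to L$" already presupposes the assignment $\theta$ is compatible with composition, and if the paper's Definition \ref{351} does not say so explicitly, one adds that $\theta$ is multiplicative, which is the natural hypothesis.

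\textbf{Backward direction.} Conversely, given an $(M\xto{\d}L)$-extension structure $\varrho$ on (\ref{canext}), define $\theta_{g,\lambda}:=\varrho(g,\lambda)$; then $\varrho\circ\kappa=\d$ gives the first condition $\vartheta_{1,\eta(m)}=\d(m)$ (combining with $\rho\circ\varrho=\zeta$ and the computation of $\zeta$ on $\kappa(m)$), and $\rho\circ\varrho=\zeta$ together with the identification $\zeta(g,\lambda)=\vartheta_{g,\lambda}$ gives the second, $\vartheta_{g,\lambda}=\rho(\theta_{g,\lambda})$. Thus the two structures correspond bijectively. \textbf{Main obstacle.} The only real subtlety is bookkeeping between the multiplicative convention used here for $M\to L$ and the additive convention used for the extension (\ref{canext}) in the previous subsection, and making sure the conjugation formula $\zeta(b)(m)=b+m-b$ really does reproduce $\vartheta_{g,\lambda}$ when $b=(g,\lambda)$ — i.e. that the group law $\bu$ on $B_\Gamma$ is compatible with the $\vartheta$'s. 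That compatibility is exactly identity (\ref{1}) for $\vartheta$, so no new work is needed; everything else is formal.
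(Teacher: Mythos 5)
Your proposal is correct and matches the paper's treatment: the paper offers no proof of Lemma \ref{17a} at all, simply declaring $\theta_{g,\lambda}$ to be the value of $\varrho$ on $(g,\lambda)$ and asserting the equivalence as a fact, which is precisely the definitional unwinding you carry out (identifying the data $\theta$ with a set-map $\varrho:B_\Gamma\to L$ and matching the axioms on each side). Your one hedge is also well placed: the multiplicativity $\theta_{g,\lambda}\theta_{h,\mu}=\theta_{gh,\lambda\,{}^g\mu}$ needed for $\varrho$ to be a homomorphism does not follow from $\vartheta_{g,\lambda}=\rho(\theta_{g,\lambda})$ alone when $\rho$ is not injective, so Definition \ref{351} must indeed be read as including it implicitly --- a gap in the paper's definition rather than in your argument.
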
 

So we can translate the results about group extensions to equivariant bouquets. In particular,  we use the cocycle description of $(M\xto{\d} L)$-extensions to classfy $G$-equivariant bouquets over $(M\xto{\d} L)$. According to Section \ref{23.16.08.20}, in order to do so, we first need to choose a section of $\pi:B_\Gamma\to G$. In our circumstances, this means that for any $g\in G$ we choose a morphism $\lambda_g:\,^gx\to x$. We assume that $\lambda_1=id$. Then we define maps $\phi:G\to L$ and $f:G\times G\to M$ by setting
  \begin{equation}\label{phi-lambda}
  \phi(g)=\theta_{g,\lambda_g}= \lambda_g \, ^g\alpha \lambda_g\1
  \end{equation}
  \begin{equation}\label{f-lambda}
  \ \  f(g,h)=\lambda_{gh} \circ \, ^g\lambda_h\1\circ \lambda_g\1.\end{equation}
 The fact that $f(g,h)$ is in $M$ follows from the diagrams
$$x\xto{\lambda_{g}^{-1}}\, ^{g}x\xto{^g\lambda_h\1} \,^{gh}x\xto{\lambda_{gh}} x.$$

Now, the results of Section \ref{23.16.08.20} imply the following results.
\begin{Le}\label{phipsi} \begin{itemize}
\item [i)] For any $g,h\in G$ one has the following equality in $L$:
$$\phi(gh)=\d(f(g,h))\phi(g)\phi(h).$$

\item [ii)] For any $g,h,t\in G$ one has
$$f(gh,t)  f(g,h)= f(g,ht)\, ^{\phi(g)}f(h,t).
$$

\item [iii)] For any $g\in G$ we have
$$f(1,g)=1=f(g,1).
$$

%
%
%
%
%
\item [iv)] Let $(\lambda_g)_{g\in G}:\,^gx\to x$ and $(\lambda'_g)_{g\in G}:\, ^gx\to x$ be two families of morphisms. Define the map $s:G\to M$ by
$$s(g)=\lambda'_g\circ \lambda_g^{-1}:x\to x.$$
Then we have 
$$\phi'(g)=\,^{\d(s(g))}\phi(g) 
$$
and 
$$f(g,h)=s(gh)\1f'(g,h) s(g) ^{\phi(g)}s(h).
$$
\end{itemize}
\end{Le}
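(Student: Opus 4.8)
The plan is to derive all four identities directly from the cocycle translation already set up, rather than re-proving anything from scratch. The key observation is Lemma \ref{17a}: the $G$-equivariant bouquet $\Gamma$ over $(M\xto{\d} L)$ corresponds to an $(M\xto{\d} L)$-extension \eqref{canext}, namely $1\to M\to B_\Gamma\to G\to 1$ with $\varrho(g,\lambda)=\theta_{g,\lambda}$, and the section $g\mapsto u(g):=(g,\lambda_g)$ of $\pi:B_\Gamma\to G$ produces exactly the functions $\phi=\varrho\circ u$ and $f$ of Lemma \ref{schreier1}. So the first step is to check that the formulas \eqref{phi-lambda} and \eqref{f-lambda} are indeed the images under this dictionary of the abstract $\phi$ and $f$ from Section \ref{23.16.08.20}: for $\phi$ one has $\phi(g)=\varrho(u(g))=\theta_{g,\lambda_g}$, which is \eqref{phi-lambda}; for $f$ one unwinds $u(gh)=f(g,h)+u(g)+u(h)$ inside $B_\Gamma$ using the $\bullet$-operation $(\lambda,g)\bullet(\mu,h)=(\lambda\,{}^g\mu,gh)$, and finds $(gh,\lambda_{gh})=(f(g,h),1)\bullet(g,\lambda_g)\bullet(h,\lambda_h)=(f(g,h)\circ\lambda_g\circ{}^g\lambda_h,\,gh)$, so that $f(g,h)=\lambda_{gh}\circ{}^g\lambda_h^{-1}\circ\lambda_g^{-1}$, which is \eqref{f-lambda}.

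With this identification in hand, parts (i) and (ii) are immediate: they are precisely \eqref{phi-id} and \eqref{f-2-cyc} of Lemma \ref{schreier1} applied to the extension $B_\Gamma$ with section $u(g)=(g,\lambda_g)$. (If one prefers a self-contained check, (i) follows from composing $\lambda_{gh}\circ{}^g\lambda_h^{-1}\circ\lambda_g^{-1}$ against $\phi(g),\phi(h)$ and using $\vartheta_{g,\lambda}=\rho(\theta_{g,\lambda})$ together with $\d({}^x m)=x\d(m)x^{-1}$; and (ii) follows from evaluating the triple composite $x\to{}^{ght}x\to x$ in two ways, exactly as in the proof of \eqref{f-2-cyc}.) Part (iii) is the normalisation: since $\lambda_1=\id$, formula \eqref{f-lambda} gives $f(1,g)=\lambda_g\circ{}^1\lambda_g^{-1}\circ\lambda_1^{-1}=\lambda_g\circ\lambda_g^{-1}=\id$ and $f(g,1)=\lambda_g\circ{}^g\lambda_1^{-1}\circ\lambda_g^{-1}=\lambda_g\circ\id\circ\lambda_g^{-1}=\id$, using ${}^g(\id)=\id$.

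For part (iv), the cleanest route is again via Proposition \ref{schreier2}(1): the two families $(\lambda_g)$ and $(\lambda'_g)$ give two sections of the same extension $B_\Gamma\to G$, hence two pairs $(\phi,f)$ and $(\phi',f')$ that are cohomologous with transition function $t(g)=\lambda'_g\circ\lambda_g^{-1}=s(g)\in M$; substituting into the formulas of Proposition \ref{schreier2}(1) yields $\phi'(g)=\d(s(g))\,\phi(g)$ — equivalently $\phi'(g)={}^{\d(s(g))}\phi(g)$ after noting $\d(s(g))\phi(g)={}^{\d(s(g))}\phi(g)\cdot\d(s(g))$ is not needed since we read off $\phi'(g)=\d(t(g))\phi(g)$ directly — and $f(g,h)=s(gh)^{-1}f'(g,h)\,s(g)\,{}^{\phi(g)}s(h)$, which are the two claimed identities. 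The main obstacle is bookkeeping: one must be careful that the dictionary of Lemma \ref{17a} matches conventions (additive versus multiplicative notation, and the order $u(gh)=f(x,y)+u(x)+u(y)$ versus the $\bullet$-multiplication), and that the ``$\phi'$'' in Lemma \ref{phipsi}(iv) is the one attached to $(\lambda'_g)$ while the ambient action in the $f$-identity is by the \emph{un}primed $\phi(g)$; once the translation is pinned down correctly, every assertion is a direct quotation of the corresponding statement in Section \ref{23.16.08.20}.
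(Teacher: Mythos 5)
Your proposal is correct and is exactly the paper's approach: the paper offers no argument beyond the single sentence that the results of Section \ref{23.16.08.20} imply Lemma \ref{phipsi}, and you have filled in precisely the intended dictionary — identifying the bouquet with its extension (\ref{canext}) via Lemma \ref{17a}, checking that the section $g\mapsto(\lambda_g,g)$ produces the formulas (\ref{phi-lambda}) and (\ref{f-lambda}), and then quoting Lemma \ref{schreier1} for (i)--(ii), the normalisation $\lambda_1=\id$ for (iii), and Proposition \ref{schreier2}(1) with $\alpha=\id$ for (iv). The only wobble is your parenthetical in part (iv) treating $\d(s(g))\phi(g)$ and ${}^{\d(s(g))}\phi(g)$ as interchangeable (they agree only if the superscript is read as left multiplication rather than conjugation — an ambiguity already present in the paper's own notation, since Proposition \ref{schreier2} and its proof give the unconjugated form $\d(t(x))\phi'(x)$), together with the unexecuted bookkeeping that the transition function of Proposition \ref{schreier2}(1) is $t=s^{-1}$ and that its $f$-identity is twisted by $\phi'$ rather than $\phi$, so a short conjugation computation is still needed to land on the exact form $f(g,h)=s(gh)^{-1}f'(g,h)\,s(g)\,{}^{\phi(g)}s(h)$; neither point affects the substance of the argument.
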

%

As usual denote by $Q$ the cokernel of the map $\d:M\to L$. The canonical map $L\to Q$ is denoted by $\pi$.

\begin{Co} The composite map $\xi=\pi\circ \phi:G\to L$ is a group homomorphism, which is independent of the choice of a family of morphisms
$(\lambda_g:\,^gx\to x)_{g\in G}$.
\end{Co}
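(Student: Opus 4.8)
The plan is to deduce both assertions directly from Lemma \ref{phipsi}. First I would show that $\xi=\pi\circ\phi$ is a homomorphism. Apply $\pi$ to the identity $\phi(gh)=\d(f(g,h))\,\phi(g)\phi(h)$ of Lemma \ref{phipsi}(i). Since $\pi$ is a group homomorphism and $Q=L/N$ with $N=\im(\d)$, the factor $\pi(\d(f(g,h)))$ is trivial in $Q$, so $\xi(gh)=\pi(\phi(gh))=\pi(\phi(g))\,\pi(\phi(h))=\xi(g)\xi(h)$. Using part (iii), $\phi(1)=\theta_{1,\lambda_1}=\theta_{1,\id}$; in fact from $\phi(1\cdot 1)=\d(f(1,1))\phi(1)\phi(1)$ together with $f(1,1)=1$ one gets $\phi(1)=\phi(1)^2$, hence $\phi(1)=1$ and $\xi(1)=1$, confirming $\xi$ is a genuine group homomorphism $G\to Q$.

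Next I would show independence of the choice of the family $(\lambda_g)_{g\in G}$. Let $(\lambda_g)$ and $(\lambda'_g)$ be two such families, with associated maps $\phi,\phi'$ and $\xi=\pi\circ\phi$, $\xi'=\pi\circ\phi'$. By Lemma \ref{phipsi}(iv), for the map $s:G\to M$ defined by $s(g)=\lambda'_g\circ\lambda_g^{-1}$ one has $\phi'(g)=\,^{\d(s(g))}\phi(g)=\d(s(g))\,\phi(g)\,\d(s(g))^{-1}$ in $L$. Applying $\pi$ and again using $\pi\circ\d=1$, all the conjugating factors die in $Q$, giving $\xi'(g)=\pi(\phi'(g))=\pi(\phi(g))=\xi(g)$ for all $g\in G$. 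Hence $\xi$ depends only on $(\Gamma,x)$ and not on the chosen section.

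There is essentially no obstacle here: the whole content is packaged in Lemma \ref{phipsi} and the observation that $\pi$ kills $N=\im(\d)$, so that both the "$2$-coboundary-type" term $\d(f(g,h))$ in (i) and the conjugation term $\d(s(g))$ in (iv) become invisible in $Q$. The only minor point worth stating carefully is that one should use part (iii) (equivalently, the normalization $\lambda_1=\id$) to secure $\phi(1)=1$ so that $\xi$ is unital, but this is immediate. One could also remark, as a consistency check, that the identity in (iv) only determines $\phi'$ up to conjugation by an element of $N$, which is precisely the ambiguity that disappears under $\pi$; this makes the independence statement natural rather than accidental.
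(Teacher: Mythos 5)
Your proposal is correct and is exactly the argument the paper intends: the paper's proof is the one-line remark that the corollary is a direct consequence of Lemma \ref{phipsi}, and you have simply spelled out that applying $\pi$ to the identities in parts (i) and (iv) kills the terms coming from $\im(\d)$. No gaps; the normalization $\phi(1)=1$ you extract from parts (i) and (iii) is the right way to secure unitality.
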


\begin{proof} This  is a direct consequence of Lemma \ref{phipsi}. 
\end{proof}

\begin{Pro}\label{20} Let $G$ be a group, $\d:M\to L$ be a crossed module and let $\phi:G\to L$, $f:G\times G\to M$ be maps satisfying the conditions
\begin{itemize}
\item $\phi(1)=\id_M$,

\item For any $g,h\in G$ one has $f(1,h)=f(g,1)=1$, 

\item For any  $g,h\in G$ one has $$\phi(gh)=\d(f(g,h))\phi(g) \phi(h).$$

\item For any $g,h,t\in G$ one has
$$f(gh,t)f(g,h)= f(g,ht) \,^{\phi(g)}f(h,t).
$$
\end{itemize}
\begin{enumerate}[label=(\roman*)]
\item There exists a well-defined $G$-bouquet $\Gamma(\phi,f)$ (or simply $\Gamma$), whose set of objects is $G$. For any $g,h\in G$ a morphism from $g$ to $h$ is a triple $(g,\alpha, h)$, where $\alpha\in M$. The composite
 $$g\xto{(g,\alpha,h)}\, h\xto{(h,\beta,t)} t$$
 is defined by
 $$(h,\beta,t)\circ (g,\alpha,h)=(g,\beta\alpha,t),$$
 and the identity morphism of an object $g$ is $(g,1,g)$. The action of $G$ on the objects of $\Gamma$ is given by 
 $$^gh=gh,$$
while on morphisms it is given by
$$^t(g,\alpha,h)=(tg, f(t,h)\phi(t)(\alpha)f(t,g)\1,th).$$ 
Then $ (\Gamma, 1, \eta,\theta)$ is a $G$-equivariant bouquet defined over $(M\xto{\d} L)$, where
$\eta:M\to Aut_\Gamma(1,1)$ is given by $\eta(\alpha)=(1,\alpha,1)$, while $\theta$ assigns to a morphism $(g,\lambda,1):g\to 1$ the element
$$\theta_{g,(g,\lambda,1)}:=\phi(g).$$

\item Let $(\phi', \psi')$ be another pair satisfying similar relations. Suppose there is a function $s:G\to M$, such that $s(1)=1$ and 
$$\phi(g)=\,^{\d(s(g))} \phi'(g), \quad f'(g,h)=s(gh)\1 f(g,h) s(g)\, ^{\phi(g)}s(h).$$ 
Then there exists an isomorphism of groupoids $$F: \Gamma(\phi,f)\to \Gamma(\phi',f')$$ which is the identity on objects, and on morphisms it is given by $$F(g,\alpha,h)=(g,s(h)^{-1}\alpha s(g),h).$$ Moreover, $F$ is a $G$-equivariant weak equivalence defined over $\d:M\to L$.

\item  Let $(\Gamma, x,\eta,\theta)$ be a $G$-equivariant bouquet defined over $M\to L$, $\lambda_g:\, ^xg\to g$ be a family of morphisms, $g\in G$ and $(\phi,f)\in Z^2(G, M\to L)$ be the corresponding functions described in Lemma \ref{phipsi}. Then there is a unique $G$-equivariant weack equivalence $T:\Gamma(\phi,f)\to \Gamma$  which is given on objects by $T(g)=\, ^gx$ and on morphisms by
$$T(g,\alpha,h)=\lambda_h^{-1}\alpha \lambda_g.$$
Moreover, $T$ is defined over $M\to L$.
\end{enumerate}
\end{Pro}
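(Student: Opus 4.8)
The three parts are essentially an unwinding of the equivariant-bouquet/$(M\xto{\d}L)$-extension dictionary (Lemmas \ref{17a}, \ref{phipsi}) through the cocycle description of Section \ref{23.16.08.20}. The plan is to verify each part by direct computation, reusing the Schreier-theoretic results already proved rather than redoing them.

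For part (i), I would first check that $\Gamma(\phi,f)$ is a well-defined groupoid: associativity of the composition $(g,\alpha,h)\mapsto (g,\beta\alpha,t)$ is immediate from associativity in $M$, and $(g,1,g)$ is clearly a two-sided identity; every morphism $(g,\alpha,h)$ has inverse $(h,\alpha\1,g)$. Non-emptiness and connectedness are obvious since $M\ne\emptyset$ and $(g,1,h):g\to h$ exists for all $g,h$. Next I would verify that the prescribed $G$-action is a functorial action: that $^t(-)$ preserves composition and identities reduces, after cancellation, exactly to the cocycle identities $f(gh,t)f(g,h)=f(g,ht)\,^{\phi(g)}f(h,t)$ and $\phi(gh)=\d(f(g,h))\phi(g)\phi(h)$ together with $f(1,h)=f(g,1)=1$ (this is the computational heart of (i), and the one place where all four hypotheses are used simultaneously — I expect the bookkeeping with the twisted conjugations $\phi(t)(\alpha)$ to be the fiddliest step). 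Finally, $\eta(\alpha)=(1,\alpha,1)$ is plainly an isomorphism $M\to Aut_\Gamma(1)$, and the identity $\vartheta_{1,\eta(m)}=\d(m)$ follows since $\vartheta_{1,\eta(m)}(\alpha)=m\alpha m\1=\,^{\d(m)}\alpha$; checking $\vartheta_{g,(g,\lambda,1)}=\rho(\phi(g))$ amounts to computing $\vartheta_{g,(g,\lambda,1)}(\alpha)=(g,\lambda,1)\circ\,^g(1,\alpha,1)\circ(g,\lambda,1)\1$ using the action formula, which collapses to $\,^{\phi(g)}\alpha$ after the $f$-terms cancel. By Lemma \ref{17a} this establishes that $\Gamma(\phi,f)$ is defined over $M\xto{\d}L$.

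For part (ii), the map $F(g,\alpha,h)=(g,s(h)\1\alpha s(g),h)$ is a bijection on morphisms with inverse $(g,\beta,h)\mapsto(g,s(h)\beta s(g)\1,h)$, and functoriality ($F$ preserves composition, identities) is a one-line cancellation using $s(1)=1$. $G$-equivariance reduces, after substituting the action formulas on both sides, to the two relations $\phi(g)=\,^{\d(s(g))}\phi'(g)$ and $f'(g,h)=s(gh)\1 f(g,h)s(g)\,^{\phi(g)}s(h)$ — precisely the hypotheses — so this is Lemma \ref{phipsi}(iv) run in reverse, or alternatively a transcription of Proposition \ref{schreier2}(1) via the dictionary. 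That $F(1)=1$, $F\circ\eta=\eta'$, and $\theta_{g,(g,\lambda,1)}=\phi(g)\mapsto\phi'(g)=\theta'_{g,F(g,\lambda,1)}$ is immediate since $s(1)=1$; hence $F$ is defined over $M\xto{\d}L$, and being an equivalence of groupoids it is a weak $G$-equivalence.

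For part (iii), given the bouquet $(\Gamma,x,\eta,\theta)$ with section $(\lambda_g)$ and associated $(\phi,f)$ from Lemma \ref{phipsi}, I would check that $T(g)=\,^gx$, $T(g,\alpha,h)=\lambda_h\1\circ\,^g\alpha\circ\lambda_g$ — wait, the stated formula is $\lambda_h\1\alpha\lambda_g$, which makes sense once one notes the source here is the standard bouquet so the morphism $(g,\alpha,h)$ already carries the data of a morphism $^{h\1g}x\to x$; I would make this identification precise using formula (\ref{f-lambda}) and then verify $T$ preserves composition (the $f(g,h)$ appearing in the composition formula of $\Gamma(\phi,f)$ is exactly $\lambda_{gh}\circ\,^g\lambda_h\1\circ\lambda_g\1$, so the identity needed is a rearrangement of this). $G$-equivariance of $T$ follows by comparing $^tT(g,\alpha,h)$ against $T(\,^t(g,\alpha,h))$ using (\ref{phi-lambda})–(\ref{f-lambda}) and the functoriality of the $G$-action on $\Gamma$. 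That $T$ is essentially surjective and fully faithful is clear (it is bijective on objects into the $G$-orbit of $x$, which is everything up to connectedness, and bijective on hom-sets), and $T(1)=x$, $T\circ\eta=\id$, $\theta_{g,(g,\lambda,1)}=\phi(g)=\theta_{g,\lambda_g}$ show it is defined over $M\xto{\d}L$; uniqueness follows since $T$ is forced on objects and on $Aut_\Gamma(1)$ and $\Gamma(\phi,f)$ is generated under composition and the $G$-action by these. The main obstacle throughout is not conceptual but the careful matching of conventions — multiplicative vs.\ the earlier additive notation, and keeping the twisted-conjugation factors $^{\phi(g)}(-)$ in the right place — so I would set up the dictionary with Lemma \ref{17a} explicitly at the start and then let each verification be a mechanical substitution.
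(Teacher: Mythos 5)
Your plan matches the paper's proof essentially step for step: direct verification of the groupoid axioms, reduction of the $G$-action identity $^g(^h(-))=\,^{gh}(-)$ to the two cocycle conditions by checking only the generating morphisms $(1,\alpha,1)$ and $(1,1,s)$, and mechanical functoriality/equivariance checks for $F$ and $T$ in the same two special cases. You in fact go slightly beyond the printed proof by explicitly verifying the ``defined over $M\to L$'' conditions in part (i) and addressing uniqueness in part (iii), both of which the paper disposes of with ``comparing the definitions''.
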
 

\begin{proof}
\begin{enumerate}[label=(\roman*)]
\item The associativity of the composition law follows from the associativity of the multiplication in $A$. The unit axiom is obvious, hence $\Gamma$ is a small category. Since $(h,\alpha^{-1},g)$ is 
the inverse of $(g,\alpha,h)$, and for any $g\in G$ we have a morphism
$$\lambda_g=(g,1,1):g\to 1$$
we see that $\Gamma$ is a nonempty and connected groupoid. Hence it remains to show that $\Gamma$ is a $G$-groupoid. 
This means verifying the following identities:

\begin{equation}\label{5}
^s((h,\beta,t)\circ (g,\alpha,h))=\, ^s(h,\beta,t)\circ\, ^s (g,\alpha,h),
\end{equation}

\begin{equation}\label{6}  ^g(^h(s,\alpha,t))=\,^{gh}(s,\alpha,t).
\end{equation}

The first identity is a direct consequence of the fact that $\phi(s)$ is an automorphism of $A$. For the second identity, observe that we have
$$(s,\alpha, t)=(1,1,t)(1,\alpha,1)(s,1,1).$$
We also have $(1,1,t)=(t,1,1)^{-1}$. Hence, it follows from (\ref{5}) that it suffices to prove the identity (\ref{6}) in two particular cases: i) when $t=s=1$ and ii) when $t=\alpha=1$. 

For i) observe that we have
\begin{align*}
^g(^h(1,\alpha,1)) &= \, ^g(h,\phi(h)(\alpha),h)\\
&= (gh,f(g,h)\phi(g)\phi(h)(\alpha)\psi(g,h)\1,gh)
\end{align*}
By the third condition, the last expression is equal to $(gh, \psi(gh), gh)=\,^{gh}(1,\alpha,1)$. Hence, the case i) is done.  

For ii) we have
\begin{align*}
^g(^h(1,1,s))&= \, ^g(h, f(h,s),hs)\\
&= (gh,f(g,hs)^{\phi(g)}f(h,s)f(g,h)\1,ghs).
\end{align*}
by the fourth condition the last expression is equal to $(gh,f(gh,s),ghs)=\, ^{gh}(1,1,s)$. This finishes the proof.
\item If $(g,\alpha,h)$ and $(h,\beta, t) $ are two composable morphisms in $\Gamma(\phi,f)$, then 
\begin{align*}F(h,\beta, t)\circ F (g,\alpha,h)&=(g, s(t)^{-1}\beta s(h) s(h)^{-1}\alpha s(g),t)\\
&=F\left ( (h,\beta, t)\circ (g,\alpha,h)\right ).\end{align*}
Thus, $F$ is indeed a functor. Since it is full and faithful and the identity on objects, it is an isomorphism of categories. To show that $F$ is $G$-equivariant, we need to show
$$F(^t(g,\alpha, h))=\, ^tF(g, \alpha,h).$$
Since $F$ is a functor, we need to check the equality in two particular cases, as was done in part (i): when $g=h=1$ and $g=\alpha=1$. In the first case we have:
$$^tF(1,\alpha,1)=\,^t(1,\alpha,1)=(t,\phi'(t)(\alpha),t)=(t,s(t)^{-1}\phi(t)(\alpha)s(t),t)=F(^t (1,\alpha,1)).$$
For the second case, we have
$$F(^t(1,1,h))=F(t, f(t,h),th)=(t,s(th)\1 f(t,h)s(t),th).$$
By assumption, this can be rewritten as
$$= (t,f'(t,h)^{\phi(t)}(s(h)\1),th)=\,^t(1,s(h)\1,h)=\,^tF(1,1,h).$$
Thus $F$ is $G$-equivariant. Comparing the definitions we see that it is indeed defined over $M\to L$.

\item From the description it is clear that $T$ is full and faithful. Since both groupoids are connected, $T$ is an equivalence of categories. Next, we will show that $T$ is $G$-equivariant. This is clear on objects. To check for morphisms it suffices to consider only the morphisms of the form $(1,\alpha,1) $ and $(1,1,h)$, see  the proof of part (i). Then we have
$$T(^t(1, 1,h))=T(t,f(t,h),th)= \lambda_{th}\1 f(t,h)\lambda _t=\, ^t(\lambda_h)\1=\, ^tT(1,1,h).$$
Here we used the equality (\ref{f-lambda}). For the second case we have
$$T(^t(1,\alpha,1))=T(t,\phi(t)(\alpha),t)=\lambda_t\1\phi(t)(\alpha)\lambda _t=\,^t\alpha =\, ^tT(1,\alpha,1).$$
Comparing the definitions we see that $T$ is
indeed defined over $M \to L$.

\end{enumerate}

\end{proof}
  
To summarise,  if  $(\Gamma, x,\eta, \theta)$ is a $G$-equivariant bouquet defined over $M\to L$, we can choose a family of morphisms $\lambda_g:\, ^gx\to x$, $g\in G$. Then  Lemma \ref{phipsi} gives an element $(\phi,f)\in Z^2(G, M\to L)$. The corresponding class in $H^2(G, M\to L)$ is independent of the chosen $\lambda_g$ (thanks to Lemma \ref{phipsi}, part iv). It is called \emph{the characteristic class} of $(\Gamma, x,\eta, \theta)$ and is denoted by $ch( \Gamma, x,\eta, \theta)$. By Proposition \ref{20}, part (i), any element of $H^2(G, M\to L)$ is a characteristic class of a  $G$-equivariant bouquet defined over $M\to L$. Moreover, if  $(\Gamma', x',\eta', \theta')$ is also a $G$-equivariant bouquet defined over $M\to L$ such that
	$$ch( \Gamma, x,\eta, \theta) = ch( \Gamma', x',\eta', \theta'),$$
then there exists a $G$-equivariant bouquet $(\Gamma'', x'',\eta'', \theta'')$ defined over $M\to L$ and $G$-equivariant weak equivalences defined over $M\to L$
$$(\Gamma, x,\eta, \theta)\longleftarrow (\Gamma'', x'',\eta'', \theta'')\longrightarrow
(\Gamma', x',\eta', \theta'').$$
In fact, we can choose families of morphisms $\lambda_g$ in $(\Gamma, x,\eta, \theta)$ and $\lambda_g'$ in $(\Gamma', x',\eta', \theta')$. Assume $(\phi,f)$ and $(\phi',f')$ are the corresponding cocycles. By part (iii) of the previous proposition, we have $G$-equivariant weak equivalences $\Gamma(\phi, f)\to \Gamma$ and $\Gamma'(\phi', f')\to \Gamma' $ defined over $M\to L$. 
By our assumptions $(\phi,f)$ and $(\phi',f')$ are homotopic, so we can use part (ii) of the previous proposition to construct a $G$-equivariant weak equivalence $\Gamma(\phi, f)\to \Gamma(\phi', f')$ defined over $M\to L$. So we can take $\Gamma''$ to be $\Gamma(\phi, f)$.

\section{Bitorsors over crossed modules} 
\subsection{The first cohomology and bitorsors over crossed modules} 

In this section we intoduce the notion of a bitorsor over a crossed module, generalising the classical notion of a bitorsor \cite{breen}, and then we relate the isomorphism classes of such bitorsors with the first cohomology of a group with coefficient in a crossed module.

We first recall the classical relationship between torsors and the first cohomology following \cite{serre}.  Let  $G$ be a group and $M$ be a left $G$-group. Assume $P$ is a set on which $M$ acts from the right and $G$ acts from the left in such a way that 
$$^x(pm)=\,^xp\, ^xm$$
holds for any $x\in G$, $p\in P$ and $m\in M$. Such a set $P$ is a \emph{ right $M$-torsor} if  it is nonempty and for any $p,q\in P$ there exists a unique $m_0\in M$, denoted by $[p\diagdown q]$, such that $q=pm_0$. Thus, we have

\begin{equation}\label{q=pm}
q=p[p\diagdown q].
\end{equation}

In a similar way one can define left torsors. 

It is well known that if $f:G\to M$ is a $1$-cocycle, that is $f(xy)=f(x)\, ^xf(y)$, then the corresponding torsor is $P$, where $P=M$ as a set, on which $M$ acts by right translations and a new left action of $G$ on $P$ is defined by
$$^{x\cdot}m=f(x)\, ^xm.$$
Conversely, if $P$ is a right $M$-torsor and an element $p_0\in P$ is chosen, then one can define the function $f:G\to M$ by
\begin{equation}\label{tor.1.coc}
f(x)=[p_0\diagdown \,^xp_0],
\end{equation}
or, equivalently, by $^xp_0=p_0f(x)$. Then $f$ is a $1$-cocycle, yielding a bijection between isoclasses of right $M$-torsors and $H^1(G,M).$  The torsor  corresponding to the trivial $1$-cocycle is simply $M$ on which $M$ acts by left translation with the given action of $G$. Obviously, the element $p_0=1\in M$ satisfies the property $^xp_0=p_0$. Conversely, if a torsor $P$ has a $G$-fixed element $p_0$, then the map $\tau:M\to P$ given by $\tau(m)=p_0m$ is an isomorphism of right $M$-torsors. Hence a $G$-fixed point in a torsor defines its trivialisation.

We wish to find a similar bijection for $H^1(G, (M\xto{\d} L))$. Recall that 
$$H^1(G, (M\xto{\d} L)):=Z^1(G,  (M\xto{\d} L))/\sim $$
where $Z^1(G,  (M\xto{\d} L))$ is the set of all $1$-cocycles of $G$ with coefficients in $M\xto{\d}L$, that is the set of all pairs  $(f,\tau)$, where  $f:G\to M$ is a map and $\tau$ is an element in $L$ such that
\begin{equation}\label{f=1coc} f(xy)=f(x) \, ^xf(y),
\end{equation}
\begin{equation}\label{detau}
\tau=\d f(x)\, ^x\tau.
\end{equation}
Next, if $(f,\tau), (f',\tau')\in Z^1(G,  (M\xto{\d} L))$, then we write $(f,\tau)\sim (f',\tau')$ if there exists $m\in M$ such that $\tau= \d(m)\tau'$ and $f(x)=m f'(x)\, (^xm)^{-1}$.

\begin{De}\label{def.bit.cr} Let $M\xto{\d} L$ be a $G$-crossed module. An $(M\xto{\d} L)$-bitorsor is a pair $(P,\alpha)$, where $P$ is a right $M$-torsor and $\alpha:P\to L$ is a function such that the following two conditions hold 
\begin{itemize}
\item [i)] $\alpha(\,^xp)=\, ^x\alpha(p)$, for all $x\in G$,

\item [ii)] $\alpha(pm)=\alpha(p)\d(m)$.
\end{itemize}

\end{De} 

\begin{Le}\label{zvlaal} For $p,p'\in P$ we have   
$$\alpha(p) \d([p\diagdown p'])=\alpha(p').$$
\end{Le}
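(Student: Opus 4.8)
The plan is to read off both identities in Definition \ref{def.bit.cr} and use the defining property \eqref{q=pm} of a right $M$-torsor, which says that $p' = p[p\diagdown p']$ with $[p\diagdown p']\in M$ the unique such element. This is a purely formal one-line computation.

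First I would write $p' = p[p\diagdown p']$ using \eqref{q=pm}. Applying $\alpha$ to both sides and invoking condition ii) of Definition \ref{def.bit.cr} with $m = [p\diagdown p']$, I get
$$\alpha(p') = \alpha\bigl(p[p\diagdown p']\bigr) = \alpha(p)\,\d([p\diagdown p']),$$
which is exactly the claimed equality. That is the entire argument; there is no real obstacle here, since everything is immediate from the definitions — the only thing to be careful about is that $[p\diagdown p']$ genuinely lies in $M$ (so that condition ii) applies), which is guaranteed by the definition of a right $M$-torsor. One could add a remark that this lemma is the bitorsor analogue of the familiar fact that the "gauge" function $\alpha$ is determined on all of $P$ by its value at a single point together with the torsor difference map, but that is not needed for the proof itself.
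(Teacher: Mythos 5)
Your proof is correct and is essentially identical to the paper's own argument: both write $p' = p[p\diagdown p']$ from the torsor property \eqref{q=pm} and then apply condition ii) of Definition \ref{def.bit.cr}. Nothing further is needed.
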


\begin{proof} In fact, we have $p'=pm$, where $m=[p\diagdown p']$. Hence
$$\alpha(p')=\alpha(pm)=\alpha(p)\d (m).$$

\end{proof}

If $(P,\alpha)$ and $(P',\alpha')$ are $(M\xto{\d} L)$-bitorsors, then a \emph{morphism} $(P,\alpha)\to (P',\alpha')$ is a map $\phi:P\to P'$ such that $\phi$ respects the actions of $G$ and $M$ and $\alpha'\circ \phi=\alpha.$

Since any morphism of $M$-torsors is a bijective map, it follows that any morphism of $(M\xto{\d} L)$-bitorsors is an isomorphism. Denote by ${\bf Bitor}_G(M\xto{\d} L)$ the set of isomorphism classes of $(M\xto{\d} L)$-bitorsors.

\begin{Pro}\label{26.19.08.2020} Let $(M\xto{\d} L)$ be a crossed module. Then one has a bijection
$${\bf Bitor}_G(M\xto{\d} L) \cong H^1(G,(M\xto{\d} L)).$$
\end{Pro}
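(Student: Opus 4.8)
The plan is to construct mutually inverse maps between isomorphism classes of $(M\xto{\d} L)$-bitorsors and $H^1(G,(M\xto{\d} L))$, mimicking the classical correspondence recalled just before the statement. First I would define the map from cocycles to bitorsors: given $(f,\tau)\in Z^1(G,(M\xto{\d} L))$, I would take $P=M$ as a right $M$-torsor (right translation) with the twisted left $G$-action $^{x\cdot}m=f(x)\,^xm$, exactly as in the classical case, and set $\alpha:P\to L$ by $\alpha(m)=\d(m)\tau$. One then checks condition (ii) of Definition \ref{def.bit.cr}, $\alpha(m m')=\d(mm')\tau=\d(m)\d(m')\tau=\alpha(m)\d(m')$ — wait, I need $\alpha(pm)=\alpha(p)\d(m)$, i.e. with right translation $p\cdot m = pm$, so $\alpha(pm)=\d(pm)\tau=\d(p)\d(m)\tau$; this is not obviously $\d(p)\tau\d(m)$ unless $\d(m)\tau=\tau\d(m)$ — so more care is needed and the correct formula is likely $\alpha(m)=\tau^{-1}\d(m)\tau$ or one uses the cocycle relation $\tau=\d f(x)\,^x\tau$; I would pin down the sign/ordering conventions so that both (i) and (ii) hold, using (\ref{detau}) for equivariance (i).

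Next I would define the map from bitorsors to cocycles: given $(P,\alpha)$, choose a point $p_0\in P$ and set $f(x)=[p_0\diagdown\,^xp_0]$ and $\tau=\alpha(p_0)$. The relation (\ref{f=1coc}) is the classical torsor computation; the relation (\ref{detau}) follows from Lemma \ref{zvlaal} applied to $p=p_0$, $p'=\,^xp_0$ together with condition (i) of Definition \ref{def.bit.cr}: indeed $\alpha(\,^xp_0)=\,^x\alpha(p_0)=\,^x\tau$ while $\alpha(\,^xp_0)=\alpha(p_0)\d([p_0\diagdown\,^xp_0])=\tau\d f(x)$ — again an ordering issue to reconcile with the stated form $\tau=\d f(x)\,^x\tau$, which I would resolve by being consistent about whether $\alpha$ lands via left or right multiplication.

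Then I would verify the two maps are well-defined on the respective equivalence classes: changing the base point $p_0$ to $p_0 m$ changes $(f,\tau)$ by the relation $\sim$ defined just before Definition \ref{def.bit.cr} (this is the classical computation plus $\alpha(p_0 m)=\alpha(p_0)\d(m)$), and conversely a morphism of bitorsors $(P,\alpha)\to(P',\alpha')$ induces the relation $\sim$ on the associated cocycles once compatible base points are chosen. Finally I would check the two composites are the identity: starting from $(f,\tau)$, building the bitorsor $(M,\alpha)$ and taking $p_0=1\in M$ recovers $f(x)=[1\diagdown\,^{x\cdot}1]=f(x)$ and $\tau=\alpha(1)=\tau$ (with the right normalisation of $\alpha$); starting from $(P,\alpha)$ with a chosen $p_0$, the map $m\mapsto p_0 m$ is an isomorphism of the reconstructed bitorsor onto $(P,\alpha)$ respecting $\alpha$ by condition (ii). The main obstacle I anticipate is purely bookkeeping: getting the left/right and inversion conventions in the definition of $\alpha$ on the model torsor $M$ exactly right so that Definition \ref{def.bit.cr}(ii) and the cocycle identity (\ref{detau}) match up with the stated orientations — everything else is a transcription of Serre's classical argument with the extra datum $\alpha$ carried along.
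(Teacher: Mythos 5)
Your strategy is exactly the paper's: the same two constructions in both directions, the same well-definedness checks, and the same use of Lemma \ref{zvlaal} and condition (i) of Definition \ref{def.bit.cr} to extract (\ref{detau}). The one point you leave genuinely unresolved is the normalisation, and your candidate guesses there are both wrong, so let me pin it down. From a bitorsor you must take $\tau=(\alpha(p_0))^{-1}$, not $\alpha(p_0)$: your own computation gives $^x\alpha(p_0)=\alpha(p_0)\,\d f(x)$, and it is only after inverting that this becomes the stated identity $\tau=\d f(x)\,^x\tau$. Correspondingly, on the model torsor $P=M$ the correct formula is $\alpha(m)=\tau^{-1}\d(m)$ --- not $\d(m)\tau$ and not the conjugate $\tau^{-1}\d(m)\tau$ you suggest, since for the latter $\alpha(pm)=\tau^{-1}\d(p)\d(m)\tau$ differs from $\alpha(p)\d(m)=\tau^{-1}\d(p)\tau\d(m)$ unless $\d(m)$ and $\tau$ commute. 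With $\alpha(m)=\tau^{-1}\d(m)$ condition (ii) is immediate and condition (i) follows from substituting $\tau^{-1}=(^x\tau)^{-1}(\d f(x))^{-1}$, which is where (\ref{detau}) enters. Everything else in your outline (base-point independence, compatibility with $\sim$, the composites being identities) goes through as you describe and matches the paper.
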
 

\begin{proof} Take an $(M\xto{\d} L)$-bitorsor $P$ and choose an element $p_0\in P$. Since $P$ is a right $M$-torsor, it defines a function $f:G\to M$ by the equality (\ref{tor.1.coc}), and as already said, $f$ satisfies the condition (\ref{f=1coc}). We also set $$\tau=(\alpha(p_0))^{-1}.$$ By the conditions in Definition \ref{def.bit.cr} we have,
\begin{align*}
\d f(x)\, ^x\tau&=\d f(x)(\alpha(\,^xp_0))^{-1}\\ &=\d f(x)(\alpha(p_0f(x)))^{-1}\\ &= \d f(x)(\alpha(p_0)\d (f(x)))^{-1}\\ &=(\alpha(p_0))^{-1}\\  &=\tau.
\end{align*}

Hence, the condition (\ref{detau}) also holds. Thus $(f,\tau)\in Z^1(G,(M\xto{\d} L)).$ Let us show that the class of the pair $(f,\tau)$ in $H^1(G,(M\xto{\d} L))$ depends only on  the isomorphism class of $P$. In fact, if $\phi:P\to P'$ is an isomorphism of $(M\xto{\d} L)$-bitorsors and $p_0'\in P'$ is chosen arbitrarily, we have $p_0'=\phi(p_0)m$, where $m=[\phi(p_0)\diagdown p_0']$. Now we consider $^x(p_0')$ and compute it in two different ways. Firstly, we have
$$^x(p_0')=p_0'f'(x)=\phi(p_0)mf'(x)$$
Secondly, we also have
\begin{align*}
^x(p_0')&=\,^x(\phi(x)m)\\ &=\phi(^xp_0)\,^xm\\&=\phi(p_0f(x))\,^xm\\&=\phi(p_0)f(x)\, ^xm.
\end{align*}

Comparing these expressions we obtain 
$mf'(x)=f(x)\, ^xm.$

Next, we consider the action of $\alpha$ on $ p_0$. Since $P$ is a right $M$-torsor, we can write $p_0'=\phi(p_0)m$ where $m=[\phi(p_0)\diagdown p_0']$. So, we can write
\begin{align*}
(\tau')^{-1} &= \alpha'(p_0')\\ &=\alpha'(\phi(p_0)m)\\ & =\alpha(p_0)m\\ & =\tau^{-1}m.
\end{align*}

Hence $(f,\tau)\sim (f',\tau')$. Thus, we have constructed the well-defined map
$${\bf Bitor}_G(M\xto{\d} L) \to H^1(G,(M\xto{\d} L)).$$

To construct the inverse map, we proceed as follows. Take a pair $(f,\tau)\in Z^1(G, (M\xto{\d} L))$ and define $P$ to be a right $M$-torsor corresponding to the $1$-cocycle $f$. Thus $P=M$ as a set. The right action of $M$ on $P=M$ is given by the multiplication in $M$, while the action of $G$ on $M$ is given by $^{x\cdot} m=f(x)\, ^xm$. It remains to define the function $\alpha:P=M\to L$. We take 
$$\alpha(m)=\tau^{-1} \d(m).$$
We have to check that the conditions i)-ii) of the Definition   \ref{def.bit.cr} are fulfilled.

\begin{itemize}
	\item [i)] We have $$\alpha(^{x\cdot} m)=\alpha(f(x)\, ^xm)=\tau^{-1}\d (f(x)) \, ^x \d (m).$$ By (\ref{detau}) we have $\tau^{-1}=(\,^x\tau)^{-1}\d (f(x))^{-1}$. Hence we can rewrite
\begin{align*}
\alpha(^{x\cdot} m)& =(^x\tau)^{-1}\, ^x\d (m)\\ &=\, ^x(\tau^{-1}\d (m))\\ & =\, ^x\alpha(m)
\end{align*}
and the condition i) follows.
\item [ii)] We also have 
$$\alpha(m_1m)=\tau^{-1}\d (m_1m)=\tau^{-1} \d (m_1)\d (m)=\alpha(m_1)\d (m)
$$
and ii) follows.
\end{itemize}

Denote this $(M\xto{\d} L)$-bitorsor by $P_{(f,\tau)}$. Assume $(f,\tau)\sim (f',\tau')$. Thus $\tau= \d (m_0)\tau'$ and $f(x)=m_0 f'(x)\, (^xm_0)^{-1}$ for an element $m_0\in M$. Then the map $\phi:P\to P'$ is an isomorphsim, where $\phi(m)=m_0^{-1}m$. In this way we constructed the inverse map $ H^1(G,(M\xto{\d} L))\to {\bf Bitor}_G(M\xto{\d} L)$, finishing the proof.

\end{proof}

We also have the following proposition.

\begin{Pro}\label{bitisweak} Let $(P,\alpha)$ be a $(M\xto{\d} L)$-bitorsor. For an element $m\in M$ and $p\in P$, we set
$$m*p:=p\, ^{\xi(p)}m$$
where  $\xi(p)=\alpha(p)^{-1}.$ 
With this operation $P$ is a left $M$-torsor and the left and right actions of $M$ on $P$ are related by the rule
$$(m*p)n=m*(pn).$$
\end{Pro}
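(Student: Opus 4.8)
The plan is to verify three things in turn: that $m * p := p\, {}^{\xi(p)}m$ with $\xi(p) = \alpha(p)^{-1}$ is a left action of $M$ on $P$, that $P$ with this action is a left $M$-torsor, and that the compatibility $(m*p)n = m*(pn)$ holds. Throughout I will use the crossed module identity ${}^{\d(n)}m = nmn^{-1}$ and the two defining properties of a bitorsor, namely $\alpha(pm) = \alpha(p)\d(m)$ and $\alpha({}^xp) = {}^x\alpha(p)$, the latter being needed only if one also wants $G$-equivariance of the left action (which the statement does not explicitly claim, so I will focus on the $M$-torsor structure).

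First I would check that $*$ is an action. Clearly $1 * p = p\,{}^{\xi(p)}1 = p$. For associativity, compute $m * (n * p)$. Writing $q = n*p = p\,{}^{\xi(p)}n$, we have $\xi(q) = \alpha(q)^{-1} = \big(\alpha(p)\d({}^{\xi(p)}n)\big)^{-1}$ using property ii). Now $\d({}^{\xi(p)}n) = \alpha(p)^{-1}\,\d(n)\,\alpha(p)$ by the first crossed module axiom (with $x = \xi(p) = \alpha(p)^{-1}$), so $\alpha(q) = \alpha(p)\,\alpha(p)^{-1}\d(n)\alpha(p) = \d(n)\alpha(p)$, hence $\xi(q) = \alpha(p)^{-1}\d(n)^{-1} = \xi(p)\,\d(n)^{-1}$. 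Therefore
$$m*(n*p) = q\,{}^{\xi(q)}m = p\,{}^{\xi(p)}n\,{}^{\xi(p)\d(n)^{-1}}m = p\,{}^{\xi(p)}\!\big(n\cdot {}^{\d(n)^{-1}}m\big) = p\,{}^{\xi(p)}\!\big(n\,n^{-1}m n\big) = p\,{}^{\xi(p)}(mn),$$
using ${}^{\d(n)^{-1}}m = n^{-1}mn$. On the other hand $(mn)*p = p\,{}^{\xi(p)}(mn)$, so $m*(n*p) = (mn)*p$ as required. (The key computational point, and the only place anything subtle happens, is the identity $\xi(n*p) = \xi(p)\d(n)^{-1}$; everything else is bookkeeping with the crossed module axioms.)

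Next, to see $P$ is a left $M$-torsor: $P$ is nonempty since it is a right $M$-torsor. Given $p, q \in P$, I must produce a unique $m \in M$ with $m * p = q$. Since $P$ is a right torsor, $q = p\,[p\diagdown q]$, and I want $p\,{}^{\xi(p)}m = p\,[p\diagdown q]$, i.e. ${}^{\xi(p)}m = [p\diagdown q]$. Because the $L$-action on $M$ is by automorphisms, this determines $m$ uniquely as $m = {}^{\xi(p)^{-1}}[p\diagdown q] = {}^{\alpha(p)}[p\diagdown q]$; conversely this $m$ works. Hence left-translation by elements of $M$ acts simply transitively, so $P$ is a left $M$-torsor.

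Finally, the compatibility: $(m*p)n = \big(p\,{}^{\xi(p)}m\big)n = p\,\big({}^{\xi(p)}m\big)\,n$ (associativity of the right $M$-action), while $m*(pn) = (pn)\,{}^{\xi(pn)}m$. By property ii), $\alpha(pn) = \alpha(p)\d(n)$, so $\xi(pn) = \d(n)^{-1}\xi(p)$, and thus ${}^{\xi(pn)}m = {}^{\d(n)^{-1}}\big({}^{\xi(p)}m\big) = n^{-1}\big({}^{\xi(p)}m\big)n$ by the second crossed module axiom. Therefore $m*(pn) = pn\,n^{-1}\big({}^{\xi(p)}m\big)n = p\,\big({}^{\xi(p)}m\big)n = (m*p)n$, which is exactly the claimed rule. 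I expect no real obstacle here — the proof is a sequence of routine manipulations with the two crossed module axioms and the two bitorsor axioms; the only thing to be careful about is tracking how $\alpha$ (equivalently $\xi$) transforms under the left action, since that is where the noncommutativity interacts with $\d$.
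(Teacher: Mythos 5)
Your computations are all correct, and the route is close to the paper's but not identical: you verify the left-action axiom $m*(n*p)=(mn)*p$ directly, by first establishing $\xi(n*p)=\xi(p)\,\d(n)^{-1}$ from bitorsor axiom ii) and the first crossed-module identity, whereas the paper proves the compatibility rule $(m*p)n=m*(pn)$ \emph{first} and then deduces associativity from it in one line, via $m_1*(m_2*p)=m_1*\bigl(p\,{}^{\xi(p)}m_2\bigr)=(m_1*p)\,{}^{\xi(p)}m_2$. The paper's order is slightly more economical (the only $\xi$-transformation law it needs is $\xi(pn)=\d(n^{-1})\xi(p)$, used once); your version costs one extra computation but makes the associativity self-contained. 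The simple-transitivity argument is the same in both.

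There is, however, one genuine omission. You explicitly set aside the identity $^g(m*p)=({}^gm)*({}^gp)$ on the grounds that the statement ``does not explicitly claim'' $G$-equivariance. But in this paper torsors live in the category of $G$-sets: the definition of a right $M$-torsor includes the compatibility $^x(pm)={}^xp\,{}^xm$, and ``left torsors'' are defined ``in a similar way,'' so the assertion that $P$ is a left $M$-torsor does include the requirement $^g(mp)={}^gm\,{}^gp$. The paper accordingly verifies $({}^gm)*({}^gp)={}^gp\,{}^{\xi({}^gp)}({}^gm)={}^gp\,{}^{{}^g\xi(p)}({}^gm)={}^g\bigl(p\,{}^{\xi(p)}m\bigr)={}^g(m*p)$, using bitorsor axiom i) (which is exactly where that axiom enters) together with the equivariance identity $^x({}^\tau m)={}^{{}^x\tau}({}^xm)$ of the $G$-crossed module. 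You should add this short check; without it the left action has not been shown to be an action in the category of $G$-sets, which is what ``left $M$-torsor'' means here.
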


\begin{proof} In fact, we have
$$m*(pn)=pn \, ^{\xi(pn)} m.$$
Since $\xi(pn)=(\alpha(pn))^{-1}=(\alpha(p)\d (n))^{-1}=\d (n^{-1}) \xi(p)$, we have 
$$m*(pn)=pn\,^{\d (n^{-1}) \xi(p)}m= pn n^{-1}\, ^\xi(p)mn=p\, ^{\xi(p)}_mn=(m*p)n.$$
This proves the last identity. Based on we will prove that 
$$m_1*(m_2*p)=(m_1m_2)*p$$
In fact, we have
\begin{align*}
m_1*(m_2*p)&=m_1*(p\, ^{\xi(p)}m_2)\\ &=(m_1*p)\, ^{\xi(p)}m_2\\ &=p\, ^{\xi(p)}m_1 ^{\xi(p)}m_2\\ &=p\, ^{\xi(p)}(m_1m_2)\\ &=(m_1m_2)*p.
\end{align*}

Since $1*p=p$, we see that $P$ is also a left $M$-set. Our next claim is that for any $x\in G$, $p\in P$ and $m\in M$ we have
$$^g(m*p)=(\, ^gm)*(\, ^gp).$$ 
In fact,
\begin{align*}
(\, ^gm)*(\, ^gp)&=(\, ^gp)\, ^{\xi(\,^gp)}(^gm)\\ &=(\, ^gp)\,^{^g\xi(p)}(^gm)\\ &=(\, ^gp)\,^g(^\xi(p)m)\\ &=\, ^g(p\,^{\xi(p)}m)\\ &=\, ^g(m*p).
\end{align*}

It remains to show that for any $p,q\in P$ there exists a unique $m\in M$ such that $q=mp$. In fact, for given $p$ and $q$, the equation $q=mp$ is equivalent to $q=p\,^{\xi(p)}m$. The last one is equivalent to $^{\xi(p)}m=[p\diagdown q]$, which obviously has a unique solution $m=\,^{\xi(p)^{-1}}([p\diagdown q])$.
\end{proof}

 {\bf Remark}. Proposition \ref{bitisweak} shows that any $(M\xto{\d} L)$-bitorsor is also an $M$-bitorsor in the classical sense  \cite{breen}. In fact, $P$ is an $M$-bitorsor in the classical sense iff it is an $(M\xto{\iota} Aut(M))$-bitorsor. In fact, if $P$ is an $M$-bitorsor, that is $M$ acts on $P$ on the left and right in a compatible way and $P$ is both a left and a right $M$-torsor, then we can define the map $\alpha: P\to Aut(M)$ by $\alpha(p)=\xi(p)^{-1}$, where $\xi(p)(m)$ is the unique element of $M$ for which $mp=p\xi(p)(m)$. A direct computation shows that $(P,\alpha)$ is an $(M\xto{\iota} Aut(M))$-bitorsor.



\subsection{An obstruction theory}

For a crossed module $(M\xto{\d} L)$, we set $$A:=Ker(\d) \quad {\rm and}\quad Q:=Coker(\d).$$
Then $A$ is a central subgroup of $M$, and the action of $L$ om $M$ induces the action of $Q$ on $A$. To summarize this situation, we will say that we have a crossed extension
$$0\to A\to M\xto{\d}\, L\xto{\pi}\, Q\to 0.$$

\begin{Le} Let  $P$ be a $(M\xto{\d} L)$-bitorsor. Choose an element $p\in P$. Then $\pi(\alpha (p))\in Q$ is independent of the chosen $p\in P$. 
\end{Le}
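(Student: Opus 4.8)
The plan is to show that any two elements of $P$ give the same value of $\pi\circ\alpha$. Let $p,q\in P$ be arbitrary. Since $P$ is a right $M$-torsor, there is a (unique) $m_0\in M$ with $q=pm_0$, namely $m_0=[p\diagdown q]$. By condition (ii) of Definition~\ref{def.bit.cr} (or equivalently by Lemma~\ref{zvlaal}) we then have $\alpha(q)=\alpha(pm_0)=\alpha(p)\d(m_0)$.

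Now apply the canonical projection $\pi:L\to Q=\mathrm{Coker}(\d)$. Since $\pi$ is a group homomorphism and $\pi\circ\d$ is the trivial map (as $\im(\d)=N=\ker(\pi)$ by the very definition of the cokernel), we get
$$\pi(\alpha(q))=\pi(\alpha(p)\d(m_0))=\pi(\alpha(p))\,\pi(\d(m_0))=\pi(\alpha(p)).$$
Hence $\pi(\alpha(p))$ does not depend on the choice of $p\in P$; note $P$ is nonempty since it is a torsor, so such a $p$ exists.

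There is essentially no obstacle here: the statement is an immediate consequence of the torsor axiom together with $\pi\d=1$, exactly as recorded in Lemma~\ref{zvlaal}. The only point worth a word is that $P\neq\varnothing$, which is part of the definition of a (right $M$-) torsor, so the invariant is well-defined. This value $\pi(\alpha(p))\in Q$ is what will be denoted $\pi_*(P)$ in the sequel, and the above computation also shows it is compatible with the $G$-action via condition (i), so that in fact $\pi_*(P)\in H^0(G,Q)$.
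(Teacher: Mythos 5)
Your proof is correct and follows essentially the same route as the paper: both invoke Lemma \ref{zvlaal} to get $\alpha(q)=\alpha(p)\d([p\diagdown q])$ and then apply $\pi$, which kills the image of $\d$ since $Q=\mathrm{Coker}(\d)$. Your write-up is in fact cleaner than the paper's (whose one-line proof contains a typo, writing $\pi(\xi(q))$ where $\alpha(q)$ is meant), and the closing remarks about nonemptiness and the $G$-action are harmless additions.
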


\begin{proof} Take two elements $p,q\in P$. Then by Lemma \ref{zvlaal} we have $\alpha(p)\d (m)= \pi(\xi(q)),$
where $m=[p\diagdown q]$. Hence the result.
\end{proof}

Thus $$\pi_*(P):=\pi(\xi(p))\in Q$$ is an invariant of the $(M\xto{\d} L)$-bitorsor $P$. The main property of this invariant is given by the following Lemma.

\begin{Le} Let $P$ be a $(M\xto{\d} L)$-bitorsor. Then 
$$\pi_*(P)\in H^0(G,Q).$$
\end{Le}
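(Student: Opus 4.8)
The statement $\pi_*(P)\in H^0(G,Q)$ means precisely that the element $\pi_*(P)=\pi(\xi(p))\in Q$ is fixed by the $G$-action on $Q$. Recall $Q=L/N$ where $N=\im(\d)$, and that the $G$-action on $Q$ is induced from the $G$-action on $L$ (this is well-defined because $\d$ is a $G$-equivariant map, so $N$ is a $G$-invariant subgroup). Since $\pi_*(P)$ is independent of the chosen $p\in P$ by the previous Lemma, the natural strategy is: fix any $p\in P$, compute $\,^x(\pi_*(P))$ for $x\in G$, and show it equals $\pi_*(P)$ again by exhibiting it as $\pi(\xi(q))$ for some other element $q\in P$ — the obvious candidate being $q={}^xp$.

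First I would write $\pi_*(P)=\pi(\xi(p))=\pi(\alpha(p)^{-1})$ for a fixed $p\in P$. Then, using that $\pi\colon L\to Q$ is $G$-equivariant, I compute
\[
{}^x\bigl(\pi_*(P)\bigr)={}^x\pi(\alpha(p)^{-1})=\pi\bigl({}^x(\alpha(p)^{-1})\bigr)=\pi\bigl(({}^x\alpha(p))^{-1}\bigr).
\]
Now I apply condition i) of Definition \ref{def.bit.cr}, namely $\alpha({}^xp)={}^x\alpha(p)$, to rewrite this as $\pi\bigl(\alpha({}^xp)^{-1}\bigr)=\pi(\xi({}^xp))$. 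But ${}^xp$ is again an element of $P$ (since $P$ carries a $G$-action), so by the previous Lemma $\pi(\xi({}^xp))=\pi_*(P)$. Hence ${}^x(\pi_*(P))=\pi_*(P)$ for all $x\in G$, which is exactly the assertion that $\pi_*(P)\in H^0(G,Q)$.

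There is essentially no hard part here: the only things being used are the $G$-equivariance of $\pi$ (equivalently, that $N=\im\d$ is $G$-stable, which follows from $\d$ being a morphism of $G$-groups), the equivariance axiom i) for $\alpha$, and the well-definedness Lemma just proved. The one point worth stating explicitly is why the $G$-action descends to $Q$; I would note this in a sentence before the computation. Everything else is a two-line chain of equalities, so the proof will be short.
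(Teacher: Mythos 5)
Your proof is correct. It differs slightly in packaging from the paper's: the paper proves the lemma by citing the cocycle identity $\tau=\d f(x)\,{}^{x}\tau$ (equation (\ref{detau})) that was established for $\tau^{-1}=\alpha(p_0)$ inside the proof of the bijection ${\bf Bitor}_G(M\xto{\d}L)\cong H^1(G,M\xto{\d}L)$, and then (implicitly) applies $\pi$, which kills the factor $\d f(x)$ and leaves $\pi(\tau)={}^{x}\pi(\tau)$. You instead argue directly from axiom i) of the bitorsor definition, $\alpha({}^{x}p)={}^{x}\alpha(p)$, together with the preceding lemma that $\pi(\xi(p))$ is independent of the basepoint, writing ${}^{x}\pi(\xi(p))=\pi(\xi({}^{x}p))=\pi_*(P)$. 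The two arguments rest on the same facts --- your independence lemma secretly uses axiom ii) via Lemma \ref{zvlaal}, just as the derivation of (\ref{detau}) does --- but yours is self-contained and never mentions the cocycle $(f,\tau)$, whereas the paper's is a one-line citation that requires the reader to unwind the earlier proof and to supply the application of $\pi$ themselves. Your explicit remark that the $G$-action descends to $Q=L/\im(\d)$ because $\d$ is $G$-equivariant is a point the paper leaves tacit, and is worth keeping.
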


\begin{proof} In the proof  of Proposition \ref{26.19.08.2020} we have seen that for any $p_0\in P$ one has 
the equality (\ref{detau})  for $\tau\1=\alpha(p_0)$, which implies the result. 
\end{proof}

{\bf Question}. Let $(M\xto{\d} L)$ be a crossed module and  $a\in H^0(G,Q)$. Does there exist a $(M\xto{\d} L)$-bitorsor $P$ such that $\pi_*(P)=a$?

We will see that the answer to this question is negative in general. Moreover, for a given $a\in H^0(G,Q)$ we construct an element $o(a)\in H^2(G,A)$ such that $o(a)=0$ iff there exists a $(M\xto{\d} L)$-bitorsor $P$ such that $\pi_*(P)=a$.

\begin{Le} Let $a\in  H^0(G,Q)$. Then the set 
$$L_a=\{x\in L| \pi(x)=a\}$$
is a $G$-invariant subset of $L$.
\end{Le}

\begin{proof} Take $x\in L_a$. Then for any $g\in G$, we have 
$$\pi(x)=a=\, ^ga=\, ^g\pi(x)=\pi(^gx).$$
Thus $^gx\in L_a$.
\end{proof}
To construct $o(a)\in H^2(G,A)$ we consider the groupoid $\Gamma_a$. The set of objects of $\Gamma_a$ is $L_a$. If $x,y\in L_a$, then a morphism from $x$ to $y$ in  $\Gamma_a$ is a triple $(x,m,y)$, where $m\in M$ is such that $y=\d(m)x$. The composition law in $\Gamma_a$ is defined using the multiplication in $M$, that is 
$$(y,n,z)(x,m,y)=(x, nm,z).$$
Morever the action of $G$ on $L$ and $M$ induces a $G$-equivariant groupoid structure on $\Gamma_a$ simply by
$$^g(x,m,y):=(^gx,\,^gm,\, ^gy).$$
Thus $\Gamma_a$ is a $G$-equivariant groupoid. 

\begin{Pro} \label{31.20.08.2020}
\begin{itemize}

\item [i)] $\Gamma_a$ is a  $G$-bouquet.

\item [ii)] For any object $x\in L_a$, one has a canonical isomorphism
$$A\cong Aut_{\Gamma_a}(x),$$
which sends $\alpha\in A$ to the triple $(x,\alpha,x):x\to x$.

\item [iii)] Let us fix an object $x$. The corresponding group extension (\ref{canext})
 $$0\to A\xto{\kappa} B_{\Gamma_a}^x\xto{p} G\to 0$$
 has the following description. Elements of $B_{\Gamma_a}^x$ are pairs $(m,g)$, where
 $g\in G$, and $m\in M$ is an element for which
 $$\d(m) \, ^gx=x$$
 where the group structure is given by
 $$(m,g)(n, h)=(m ^gn, gh).$$
 Moreover, the maps $\kappa$ and $p$ are defined by
 $$p(m,g)=g, \ \kappa(\alpha)=(\alpha,1).$$
 \item [iv)] The set sections of $p$ in the extension iii) are in one-to-one correspondence with functions $\psi:G\to M$ such that 
$$\d(\psi(g)) \, ^gx=x.$$
Moreover, the set section of $p$ corresponding to a function $\psi$ is a group homomorphism iff
$$\psi(gh)=\psi(g) \, ^g\psi(h).$$

\item [v)] Assume $x,y\in L_a$. Then $y=\d (n) x$ for an element $n\in M$. One has the following commutative diagram of group extensions
$$\xymatrix{0\ar[r] & A \ar[r] \ar[d]_\id &B_{\Gamma_a}^x \ar[d]_\eta \ar[r]& G\ar[r] \ar[d]_\id &0\\
0\ar[r] & A \ar[r] &B_{\Gamma_a}^y \ar[r]& G\ar[r] &0\\
}$$
where $\eta(m,g)=(nm\,^g(n\1),g)$. 
\end{itemize}
\end{Pro}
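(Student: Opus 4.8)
The plan is to establish the five claims of Proposition \ref{31.20.08.2020} more or less in the order they are stated, since each builds on the previous ones, and to reduce as much as possible to the machinery already set up for $(M\xto{\d}L)$-extensions and equivariant bouquets.

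For (i) I would first note that $\Gamma_a$ is nonempty precisely because $a\in H^0(G,Q)$ lies in the image of $\pi$, so $L_a\neq\emptyset$; connectedness is immediate since for any $x,y\in L_a$ we have $yx\1\in N=\im(\d)$, so $yx\1=\d(m)$ for some $m\in M$, giving a morphism $(x,m,y)$. The only real point to check is that $G$ acts by functors, i.e. that $^g(-,-,-)$ is well-defined and respects composition: if $y=\d(m)x$ then applying the $G$-equivariance of the crossed module, $^gy=\,^g\d(m)\,^gx=\d(^gm)\,^gx$, so $(^gx,^gm,^gy)$ is again a morphism of $\Gamma_a$; compatibility with composition and with the relations $\Phi(1)=\id$, $\Phi(gh)=\Phi(g)\Phi(h)$ is routine. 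For (ii), a morphism $x\to x$ is a triple $(x,m,x)$ with $\d(m)x=x$, i.e. $\d(m)=1$, i.e. $m\in A=\ker\d$; the composition law on such triples is exactly multiplication in $A$, so $\alpha\mapsto(x,\alpha,x)$ is a group isomorphism $A\cong Aut_{\Gamma_a}(x)$. One should also observe this identification is $G$-equivariant in the sense of Lemma \ref{uzravi}.

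For (iii) I would unwind the general construction of the extension (\ref{canext}) in the case $\Gamma=\Gamma_a$, $M$ (there) $=A$ (here), with chosen object $x$. An element of $B^x_{\Gamma_a}$ is a pair $(g,\lambda)$ with $g\in G$ and $\lambda:\,^gx\to x$ a morphism of $\Gamma_a$; but such a $\lambda$ is a triple $(^gx,m,x)$ with $\d(m)\,^gx=x$, so it is recorded by the single element $m\in M$ satisfying $\d(m)\,^gx=x$. Under this bookkeeping the group law $(\lambda,g)\bullet(\mu,h)=(\lambda\,^g\mu,gh)$ becomes: $\lambda=(^gx,m,x)$, $\mu=(^hx,n,x)$, so $^g\mu=(^{gh}x,\,^gn,\,^gx)$, and the composite $^{gh}x\xto{^g\mu}\,^gx\xto{\lambda}x$ is the triple $(^{gh}x, m\,^gn, x)$; hence $(m,g)(n,h)=(m\,^gn,gh)$, as claimed, and $\kappa,p$ have the stated form. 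Then (iv) is the direct specialisation of what a set section $G\to B^x_{\Gamma_a}$ is, namely a choice $g\mapsto(\psi(g),g)$ with $p$-section condition forcing $\psi(g)$ to satisfy $\d(\psi(g))\,^gx=x$; and such a section is a homomorphism iff $(\psi(g),g)(\psi(h),h)=(\psi(gh),gh)$, i.e. $\psi(g)\,^g\psi(h)=\psi(gh)$, using the group law from (iii). Here one must remember the normalisation $\psi(1)=1$ implicit in our sections, or note it follows.

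Finally (v): given $x,y\in L_a$ with $y=\d(n)x$, I would just write down $\eta(m,g)=(nm\,^g(n\1),g)$ and verify it is a homomorphism $B^x_{\Gamma_a}\to B^y_{\Gamma_a}$ fitting into the displayed diagram. Two things need checking: that $\eta(m,g)$ actually lands in $B^y_{\Gamma_a}$, i.e. $\d(nm\,^g(n\1))\,^gy=y$ — this follows by substituting $^gy=\d(^gn)\,^gx$ and $\d(m)\,^gx=x$ and using that $\d$ is a homomorphism, so $\d(nm\,^g(n\1))\,^gy=\d(n)\d(m)\,^g\d(n\1)\d(^gn)\,^gx=\d(n)\d(m)\,^gx=\d(n)x=y$; and that $\eta$ is multiplicative, $\eta((m,g)(m',g'))=\eta(m,g)\eta(m',g')$, which is a short computation with the group law of (iii) together with the $G$-equivariance identity $^g({}^{g'}(\cdot))=\,^{gg'}(\cdot)$. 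Commutativity of the two squares (identity on $A$ and on $G$) is then immediate from the formulas for $\kappa$ and $p$; since the rows are exact and the outer maps are the identity, $\eta$ is automatically an isomorphism (a five-lemma observation, or simply note $\eta$ is inverse to the analogous map built from $n\1$). I expect the only mildly delicate bookkeeping — and hence the main obstacle — to be in (iii)/(v): keeping the $G$-action conventions straight (left actions, the identity $^{gh}(\cdot)=\,^g({}^h(\cdot))$, and the crossed-module identity $\d(^gm)=\,^g\d(m)$) so that the translated group law and the map $\eta$ come out exactly as stated; once that is done, everything reduces to the already-established general facts about the extension (\ref{canext}) and Lemma \ref{17a}.
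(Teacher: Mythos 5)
Your proposal is correct and follows essentially the same route as the paper: direct verification of each part, identifying morphisms of $\Gamma_a$ with elements $m\in M$ satisfying $\d(m)\,{}^gx=x$ and unwinding the general extension (\ref{canext}) accordingly; your computation for the well-definedness of $\eta$ in (v) is exactly the paper's. You merely supply more detail where the paper says ``obvious'' (functoriality of the $G$-action in (i), multiplicativity of $\eta$ and commutativity of the diagram in (v)), all of which checks out.
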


\begin{proof}
\begin{itemize}
\item [i)] Since $\pi:L\to Q$ is surjective, $L_a\not = \emptyset$. Moreover, for any $x,y\in L_a$ we have $yx\1\in Ker(\pi)$. Thus, there is an element $m\in M$ such that $\d(m)=yx\1$. It follows that $(x,m,y)$ is a morphism in $\Gamma_a$. Thus $\Gamma_a$ is connected and therefore it is a $G$-bouquet.

\item [ii)] By definition, a morphism $x\to x$ is of the form $(x,\alpha,x)$ such that $\alpha\in M$ and $x=\d(m)x$, or equivalently $m\in A$ and the result follows.

\item [iii)] By definition, the elements of the group $B_{\Gamma_a}^x$ are pairs $(g,\chi)$, where $g\in G$ and $\chi:\,^gx\to x$ is a morphism of $\Gamma_a$. Hence, the result is a direct consequence of the description of morphisms in $\Gamma_a$.

\item [iv)] A section of $p$ looks like $G\ni g\mapsto (\psi(g),g)\in B_{\Gamma_a}^x$, where $\psi:G\to M$ is a map. The equlity in the question follows from iii). The last statement is a direct consequence of the description of the group law of $B_{\Gamma_a}^x$.

\item [v)] First off, we need to show that $\eta$ is a well-defined map. In fact, take $(m,g)\in B_{\Gamma_a}^x$. Thus $\d (m) ^gx=x$. We also have
$$\d (nm^g(n\1))^gy=\d(n)\d(m)\, ^g(\d(n\1)^g\d(n)^gx=\d(n) \d(m)^gx=\d(n) x=y$$
Thus $\eta$ is well-defined. The rest is obvious.
\end{itemize}
\end{proof}

\begin{De} For an element $a\in H^0(G,Q)$, we let $o(a)\in H^2(G,A)$ be the class of the extension in iii) of Proposition \ref{31.20.08.2020}.
\end{De}

By the part v) of Proposition \ref{31.20.08.2020}, this class is independent of the choice of the element $x\in L_a$. 

\begin{Le} Let $a\in H^0(G,Q)$. Then  $o(A)=0$ iff there is a  $(M\xto{\d} L)$-bitorsor $P$ such that $\pi_*(P)=a$.
\end{Le}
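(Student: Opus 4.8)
The plan is to prove both implications by relating the obstruction class $o(a)$ to the existence of a suitable section, and then to a bitorsor. Throughout we fix an object $x\in L_a$ and use the description of the extension
$$0\to A\xto{\kappa} B_{\Gamma_a}^x\xto{p} G\to 0$$
from part iii) of Proposition \ref{31.20.08.2020}, whose elements are pairs $(m,g)$ with $\d(m)\,^gx=x$.

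\textbf{From $o(a)=0$ to a bitorsor.} Suppose $o(a)=0$. Then the extension in iii) splits, so by part iv) of Proposition \ref{31.20.08.2020} there is a function $\psi:G\to M$ with $\d(\psi(g))\,^gx=x$ and $\psi(gh)=\psi(g)\,^g\psi(h)$; that is, $\psi$ is a $1$-cocycle of $G$ with coefficients in $M$. I claim the pair $(\psi,\tau)$ with $\tau=x\1$ lies in $Z^1(G,M\xto{\d}L)$: equation (\ref{f=1coc}) is exactly the cocycle identity for $\psi$, and equation (\ref{detau}), namely $\tau=\d\psi(g)\,^x\tau$, reads $x\1=\d(\psi(g))\,^g(x\1)=\d(\psi(g))(\,^gx)\1$, which is precisely the defining relation $\d(\psi(g))\,^gx=x$. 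Now build the bitorsor $P_{(\psi,\tau)}$ exactly as in the second half of the proof of Proposition \ref{26.19.08.2020}: $P=M$ as a set with the twisted $G$-action $^{g\cdot}m=\psi(g)\,^gm$, right $M$-multiplication, and $\alpha(m)=\tau\1\d(m)=x\,\d(m)$. That proof already verifies conditions i)--ii) of Definition \ref{def.bit.cr}, so $P$ is an $(M\xto{\d}L)$-bitorsor. Finally $\pi_*(P)=\pi(\xi(p_0))=\pi(\alpha(p_0)\1)$ for any choice; taking $p_0=1\in M=P$ gives $\alpha(1)=x$, so $\xi(1)=x\1$ and $\pi_*(P)=\pi(x\1)$. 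Since $x\in L_a$ we have $\pi(x)=a$; but $\pi(x\1)=a\1$, so to land on $a$ itself one should instead take $\tau=x$ (equivalently work with $x\1\in L_{a\1}$, or note the statement is symmetric under $a\mapsto a\1$) — the clean fix is to set $\tau=x$ throughout, for which (\ref{detau}) becomes $x=\d\psi(g)\,^gx$, again equivalent to the relation defining $B_{\Gamma_a}^x$. Then $\pi_*(P)=\pi(x)=a$ as required.

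\textbf{From a bitorsor to $o(a)=0$.} Conversely, suppose $P$ is an $(M\xto{\d}L)$-bitorsor with $\pi_*(P)=a$. Pick any $p_0\in P$; then $\pi(\alpha(p_0))=\pi_*(P)\1\cdot(\text{or }=a)$ depending on the sign convention, and in any case $\alpha(p_0)\in L_a$ (after the sign normalisation above, $\xi(p_0)=\alpha(p_0)\1\in L_a$, and we rename $x:=\xi(p_0)$). By the proof of Proposition \ref{26.19.08.2020}, the torsor $P$ together with the base point $p_0$ yields $(f,\tau)\in Z^1(G,M\xto{\d}L)$ with $\tau\1=\alpha(p_0)$ and $^{g}p_0=p_0f(g)$. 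Since $o(a)$ is independent of the chosen element of $L_a$ (part v), we may compute it at $x=\xi(p_0)=\tau$. The relation (\ref{detau}), $\tau=\d f(g)\,^g\tau$, translates into $\d(f(g))\,^gx=x$, so $f$ defines a section of $p:B_{\Gamma_a}^x\to G$ via $g\mapsto(f(g),g)$; the cocycle identity (\ref{f=1coc}) for $f$ says, by part iv), that this section is a group homomorphism. Hence the extension in iii) splits, i.e. $o(a)=0$.

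\textbf{Main obstacle.} The only genuinely delicate point is bookkeeping of the inversion/sign conventions: the bitorsor invariant is defined via $\xi(p)=\alpha(p)\1$ while $L_a$ is cut out by $\pi(x)=a$, and the $1$-cocycle condition (\ref{detau}) carries a $\tau$ rather than a $\tau\1$. One must be consistent about whether $x=\alpha(p_0)$ or $x=\alpha(p_0)\1$ serves as the chosen object of $\Gamma_a$; with the choice $x=\xi(p_0)=\alpha(p_0)\1$ one checks $\pi(x)=a$ (so $x\in L_a$) and the defining relation of $B_{\Gamma_a}^x$ matches (\ref{detau}) on the nose. Everything else is a direct translation between the splitting of (\ref{canext}) for $\Gamma_a$ and the existence of a $1$-cocycle $(f,\tau)$, which in turn is the data of a bitorsor by Proposition \ref{26.19.08.2020}.
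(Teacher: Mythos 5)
Your proof is correct and follows essentially the same route as the paper: it identifies splittings of the extension $B_{\Gamma_a}^x$ (via part iv) of Proposition \ref{31.20.08.2020}) with $1$-cocycles $(\psi,x)\in Z^1(G,M\xto{\d}L)$ and then invokes Proposition \ref{26.19.08.2020}. The only blemish is the intermediate claim that $\tau=x^{-1}$ satisfies (\ref{detau}) --- it does not, since that would require $\d(\psi(g))={x}^{-1}\,{}^gx$ rather than $x\,({}^gx)^{-1}$ --- but your subsequent switch to $\tau=x$ repairs this and yields $\pi_*(P)=\pi(\xi(1))=\pi(x)=a$ as required.
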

\begin{proof} By the part iv) of  Proposition \ref{31.20.08.2020}, $o(a)=0$ iff there exists an $x\in L_a$ and a function $\psi:G\to M$ such that both equalities in part iv) of Proposition \ref{31.20.08.2020} hold. Comparing these conditions with equalities (\ref{f=1coc}) and (\ref{detau}), we see that  $(\psi,x)\in Z^1(G,(M\xto{\d}L))$. By Proposition \ref{26.19.08.2020} we have a bitorsor $P$ with the expected properties. 

\end{proof} 

%
{\bf Remark}. Starting with a crossed extension
$$0\to A\to M\xto{\d}\, L\xto{\pi}\, Q\to 0$$
Borovoi constructed an exact sequence 
\begin{align*}1\longrightarrow H^1(G,A)\longrightarrow H^1(G,M\to L)\longrightarrow H^0(G,Q)\longrightarrow H^2(G,A) \\
\longrightarrow H^2(G,M\to L)\longrightarrow H^1(G,Q)--->H^3(G, \psi_3(A)),\end{align*}
see \cite[ (2.20.2)]{borovoi}. So our consideration in this section is to understand the exactness of this exact sequence at $H^1(G, M\to L)$ in terms of bitorsors and bouquets.
\subsection{Weak bitorsors over crossed modules} 
In this section we introduce a weak version of a bitorsor over  a crossed module and show that for so called faithful crossed modules both notions coincide.

\begin{De}\label{wdefbitor}
Let $P$ be a left $G$-set. Assume also that $M$ acts on $P$ on the left and right such that
$$(mp)n=m(pn), \ ^g(pm)=\,^gp\, ^gm,  \quad ^g(mp)=\,^gm\,^gp, \quad m, n\in M, g\in G, p\in P.$$
We will say that $P$ is a weak $(M\xto{\d} L)$-bitorsor, if $M$ is a right $M$-torsor and  there is given a map $\alpha:P\to L$ such that
$$mp=p\,  ^{\xi(p)}m$$
holds for all $m\in M$, where $\xi(p)=\alpha(p)^{-1}.$
\end{De}

By Proposition \ref{bitisweak} any bitorsor over $(M\xto{\d} L)$ is also a weak bitorsor. We are interested to what extent the converse is also true. 

\begin{De} We will say a crossed module $(M\xto{\d} L)$ is faithful if the condition $^\tau m=\,^{\tau '} m$ for all $m\in M$ implies $\tau=\tau'$. 
\end{De}

In other words, $(M\xto{\d} L)$ is faithful iff the induced homomorphism
$$\rho:L\to Aut(M), \quad \tau\mapsto \alpha_\tau,$$
is injective, where $\alpha_\tau(m)=\,^\tau m$, $\tau\in L$ and $m\in M$.

\begin{Pro} If $(M\xto{\d} L)$ is a faithful crossed module, then any weak $(M\xto{\d} L)$-bitorsor is also a $(M\xto{\d} L)$-bitorsor.
\end{Pro}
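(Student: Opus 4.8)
```latex
\begin{proof}[Proof plan]
The plan is to start from a weak $(M\xto{\d} L)$-bitorsor $(P,\alpha)$ and verify that $\alpha$ satisfies the two axioms of Definition~\ref{def.bit.cr}; faithfulness of $\rho:L\to Aut(M)$ is precisely what allows us to upgrade ``$\alpha$ conjugates correctly'' to ``$\alpha$ is a genuine homomorphism-type map''. Concretely, I would unwind the defining relation $mp=p\,^{\xi(p)}m$ with $\xi(p)=\alpha(p)\1$ and exploit the various compatibilities ($(mp)n=m(pn)$, $^g(pm)=\,^gp\,^gm$, $^g(mp)=\,^gm\,^gp$) to pin down how $\alpha$ behaves under the right $M$-action and under the $G$-action.

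First I would establish condition ii), $\alpha(pn)=\alpha(p)\d(n)$. Apply the defining identity twice: for any $m\in M$, on one hand $m(pn)=(mp)n=(p\,^{\xi(p)}m)n$, and on the other hand $m(pn)=pn\,^{\xi(pn)}m$. Since $P$ is a right $M$-torsor, comparing the two ways of writing the same element in terms of the right $M$-action lets me extract an equality of the form $\,^{\xi(p)}m = \,^{\d(n)\xi(pn)}m$ (using the crossed-module identity $\,^{\d(n)}k = nkn\1$ to move $n$ past). As this holds for all $m\in M$, faithfulness of $\rho$ forces $\xi(p) = \d(n)\,\xi(pn)$ in $L$, i.e. $\alpha(pn)\1 = \d(n)\,\alpha(p)\1$, which rearranges to $\alpha(pn) = \alpha(p)\,\d(n)\1\cdots$ — here I would be careful with the exact placement of inverses and the side on which $\d$ acts; the correct bookkeeping should yield $\alpha(pn)=\alpha(p)\d(n)$, possibly after noting $\xi$ is an anti- or homomorphism-like map in the appropriate sense. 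This is the step where the faithfulness hypothesis is genuinely used.

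Next I would establish condition i), $\alpha(\,^gp)=\,^g\alpha(p)$. Starting from $mp = p\,^{\xi(p)}m$, apply the $G$-action: $^g(mp) = \,^g(p\,^{\xi(p)}m)$. Using $^g(mp)=\,^gm\,^gp$ and $^g(pk)=\,^gp\,^gk$ together with the $G$-equivariant-crossed-module identity $^g(^\tau m)=\,^{{}^g\tau}(^gm)$ from Definition~\ref{eqcross}, the right-hand side becomes $\,^gp\,^{{}^g(\xi(p))}(^gm)$. Comparing with the defining identity for the point $^gp$, namely $(^gm)(^gp)=\,^gp\,^{\xi(^gp)}(^gm)$, and again invoking that this holds for all $m$ (equivalently all $^gm$, since $g$ acts bijectively on $M$), faithfulness of $\rho$ gives $\xi(^gp) = \,^g(\xi(p))$, hence $\alpha(^gp)=\,^g\alpha(p)$.

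Having checked both axioms, $(P,\alpha)$ is an $(M\xto{\d} L)$-bitorsor in the sense of Definition~\ref{def.bit.cr} — note that nonemptiness and the right-torsor property are already part of Definition~\ref{wdefbitor}, so nothing further is needed there. I expect the main obstacle to be purely notational: keeping straight the left versus right placements, the inverses introduced by $\xi(p)=\alpha(p)\1$, and the side on which $\d$ and the $G$-action intervene, so that the faithfulness conclusion is applied to the correct pair of elements of $L$. Once the identities are written out with $\rho$ made explicit, the conclusion $\tau=\tau'$ from $^\tau m = \,^{\tau'}m$ for all $m$ closes each step.
\end{proof}
```
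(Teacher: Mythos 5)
Your proposal is correct and follows essentially the same route as the paper's own proof: both arguments compute $m(pn)$ in two ways, use the right-torsor property and the Peiffer identity to obtain $^{\xi(p)}m=\,^{\d(n)\xi(pn)}m$ for all $m$, and invoke faithfulness to conclude $\xi(p)=\d(n)\xi(pn)$, then apply the $G$-action to the defining relation and faithfulness again to get $\xi(^gp)=\,^g\xi(p)$. The inverse bookkeeping you flag does work out: $\xi(p)=\d(n)\xi(pn)$ with $\xi=\alpha^{-1}$ rearranges directly to $\alpha(pn)=\alpha(p)\d(n)$.
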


\begin{proof} We have to show two identities

i) $\xi(p)=\d(n)\xi(pn)$ for all $p\in P$ and $n\in N$.

ii) $\xi(^xp)=\,^x\xi(p)$ for all $p\in P$  and $x\in G$.
\end{proof}

Take any $m\in M$. We have
\begin{align*}p \,^{\xi(p)}m&=mp \\ &= m(pn)n\1\\&=pn \, ^{\xi(pn)}mn\1\\&=p\, ^{\d(n)\xi(pn)}m.
\end{align*}
Since $P$ is a right torsor, we obtain $^{\xi(p)}m=\, ^{\d(n)\xi(pn)}m$. Since this holds for all $m\in M$ and $(M\xto{\d} L)$ is  faithful, we obtain i). 

For ii) we proceed as follows. Since $mp=p\,^{\xi(p)}m$, we apply the action by $x\in G$ to obtain
$$^x(mp)=\, ^xp\, ^x(^{\xi(p)}m).$$
Or, equivalently, $^xm^xp=\,^x p\, ^{^x{\xi(p)}}(^xm).$ 
On the other hand, we have $^xm^xp=\,^x p\, ^{\xi(^xp)}(^xm).$ Comparing these expressions and using the same reasoning as in the proof of i), we obtain ii).

\end{document}